\newcommand\Item[1][i]{%
	\ifx\relax#1\relax  \item \else \item[#1] \fi
	\abovedisplayskip=0pt\abovedisplayshortskip=0pt~\vspace*{-\baselineskip}}
\crefname{hypothesis}{Hypothesis}{Hypotheses}
\title{Stochastic Partial Differential Equation SEIRS Epidemic Models: Well-posedness and Longtime Behavior}
\author{Yuqi Li\thanks{School of Sciences, Beijing University of Posts and Telecommunications, Beijing, 100876, China 
  (\email{Llyq@bupt.edu.cn}).}
\and 
Lihua Zhang\thanks{Corresponding author. School of Sciences, Beijing University of Posts and Telecommunications, Beijing, 100876, China 
  (\email{zhlh@bupt.edu.cn}).}}
\def\bt{\begin{theorem}}
\def\et{\end{theorem}}
\def\bl{\begin{lemma}}
\def\el{\end{lemma}}
\def\br{\begin{remark}}
\def\er{\end{remark}}
\def\be{\begin{equation}}
\def\ee{\end{equation}}
\def\ce{\begin{equation*}}
\def\de{\end{equation*}}
\def\<{{\langle}}
\def\>{{\rangle}}
\newtheorem*{Main Theorem}[theorem]{Main Theorem}{\normalfont\bfseries}{\itshape}
{\bfseries}{\itshape} 
{\bfseries}{\itshape} 
{\bfseries}{\itshape} 
\begin{document}

\maketitle

\begin{abstract}
The study of epidemic models plays an important role in mathematical epidemiology. There are many researches on epidemic models using ordinary differential equations, partial differential equations or stochastic differential equations. In contrast to these researches, our work analyzes the SEIRS (Susceptible-Exposed-Infected-Recovered-Susceptible) model using stochastic partial differential equations. Specifically, we consider the effects of spatial variables and space-time white noise on the epidemic model. In the means of mild solution, we construct a contraction mapping and prove the well-posedness for the stochastic partial differential equation using the fixed point argument and the factorization method. We also find sufficient conditions for permanence and extinction to determine the longtime behavior of the solution. We hope that this work can provide some new ideas for the study of increasingly complex epidemic models and related numerical simulations. 
\end{abstract}

\begin{keywords}
stochastic partial differential equations, SEIRS epidemic model, mild solution, factorization method, longtime behavior 
\end{keywords}


\section{Introduction}
\label{sec1a}\vspace{-4mm}
  \indent There are many researches about epidemic models which play an important role in mathematical epidemiology \cite{WOKermack, AGMcKendrick}. In this research area, the basic model is the following SIS model \cite{LJSAllen2007}, in fact it is an ordinary differential equation. 
 \begin{equation}
 \begin{cases}
 dS(t)=[-\alpha \frac{S(t)I(t)}{S(t)+I(t)}+rI(t)]dt,\quad t>0 ,\vspace{1ex}\\
 dI(t)=[\alpha \frac{S(t)I(t)}{S(t)+I(t)}-rI(t)]dt,\quad t>0, \vspace{1ex}\\
 S(0)=S_{0} \geqslant 0, I(0)=I_{0} \geqslant 0,
 \end{cases}\notag
 \end{equation} 
 where $ S(t), I(t)$ are the densities of susceptible and infected populations, $\alpha$ is the infection rate, $r$ is the recovery rate. This basic model assume no immunity, no births and no deaths, it is actually a circular system, i.e. SISISISI $\cdots$. The more results about SIS model can refer \cite{NTDieu2019, RPeng, RPengandSLiu}.\\
 \indent Furthermore, the population also can be partitioned into susceptible, infected and recovered, and the other classical model is the following SIR model \cite{TBritton2009}, it is also an ordinary differential equation.
 \begin{equation}
 \begin{cases}
 dS(t)=[\Lambda-\mu _{S}S(t)- \alpha \frac{S(t)I(t)}{S(t)+I(t)}]dt,\quad t>0 , \vspace{1ex}\\ 
 dI(t)=[\alpha \frac{S(t)I(t)}{S(t)+I(t)}-(r+\mu _{I})I(t)]dt,\quad t>0, \vspace{1ex}\\
 dR(t)=[-\mu _{R}R(t)+rI(t)]dt, \quad t>0,\vspace{1ex}\\
 S(0)=S_{0} \geqslant 0, I(0)=I_{0} \geqslant 0, R(0)=R_{0} \geqslant 0,
 \end{cases}\notag
 \end{equation}
 in which $ R(t)$ are the densities of recovered populations, $ \Lambda$ is the recruitment rate of the population, $\mu _{S},\mu _{I},\mu _{R} $ are the death rates of susceptible, infected and recovered individuals.\\
 \indent With the developments of epidemic models, it has been widely recognized that there should be spatial dependence in the model \cite{GYin2020}, which will better reflect the spacial variations, and then it becomes a partial differential equation, such as the following epidemic reaction-diffusion model \cite{GYin2020, WWangandXQZhao, LZhang2015}.
 \begin{equation}
 \begin{cases}
 \frac{\partial }{\partial t}S(t,x)=k_{1}\triangle S(t,x) +\Lambda(x)-\mu _{1}(x)S(t,x)- \alpha(x) \frac{S(t,x)I(t,x)}{S(t,x)+I(t,x)}+r(x)I(t,x),\; \rm in \; \mathbb{R^{+}}\times \mathcal{O}  ,\vspace{1ex}\\
 \frac{\partial }{\partial t}I(t,x)=k_{2}\triangle I(t,x)-\mu _{2}(x)I(t,x) +\alpha(x) \frac{S(t,x)I(t,x)}{S(t,x)+I(t,x)}-r(x)I(t,x),\; \rm in \; \mathbb{R^{+}}\times \mathcal{O} ,\vspace{1ex} \\
 \partial_{\nu }S(t,x)= \partial_{\nu }I(t,x)=0, \; \rm in \; \mathbb{R^{+}}\times \partial \mathcal{O} ,\vspace{1ex}\\
 S(0,x)=S_{0}(x) , I(0,x)=I_{0}(x),  \; \rm in \;\mathcal{O}  ,
 \end{cases}\notag
 \end{equation}	
 where $\triangle$ is the Laplacian with respect to the spatial variable, $\mathcal{O}$ is a bounded domain with $ C^{2}$ boundary of $\mathbb{R}^{d} (d\geqslant1) $, $	\partial_{\nu }S, 	\partial_{\nu }I$ denote the directional derivative with the $ \nu$ being outer normal direction on $ \partial \mathcal{O}$, and $ k_{1}$ and $ k_{2}$ are positive constants representing the diffusion rates of the susceptible and infected population densities, respectively. In addition, $\Lambda(x), \mu _{1}(x), \mu _{2}(x), \alpha(x), r(x)$ are non-negative functions.\\
 \indent On the one hand, the above models are all noise free, however, random noise perturbations in the environment often inevitably appear. Recently, one popular approach is adding stochastic noise perturbations to the above deterministic models, there are more and more researches for stochastic epidemic models \cite{DHNguyen2016, NHDuandNNNhu1, NHDuandNNNhu2, NTHieu2015}. On the other hand, in addition to$ S(t), I(t), R(t)$, the exposed populations $ E(t)$ also could be considered. Naturally, how can we consider SEIRS (Susceptible-Exposed-Infected-Recovered-Susceptible) epidemic models that contain both spatial variables $ x$ and space-time white noise$ W(t,x)$? \\
 \indent In this paper, our work analyzes the SEIRS model using stochastic partial differential equations. Specifically, we consider the effects of spatial variables and space-time white noise on the epidemic model. In the means of mild solution, we construct a contraction mapping and prove the well-posedness for the stochastic partial differential equation using the fixed point argument and the factorization method in Section 3.1. In Section 3.2, we find sufficient conditions for permanence and extinction to determine the longtime behavior of the solution. In Section 2, we give the notations and assumptions, and Section 4 concludes the paper. We hope that this work can provide some new ideas for the study of increasingly complex epidemic models and related numerical simulations.
 
 \section{Preliminary}
 
 \subsection{Notations and Assumptions}
 \noindent  $\bullet$ $ \mathcal{L}$: a bounded domain with $ C^{2}$ boundary of $ \mathbb{R}^{d}, d \geqslant 1$. Assume $ \left| \mathcal{L}\right|=1$, where $ \left| \mathcal{L}\right|$ is the volume of $ \mathcal{L}$ in $\mathbb{R}^{d}$, and the initial values are non-random for simplicity. \\
 $\bullet$ $ (\Omega ,\mathcal{F}, \left\{\mathcal{F}_{t}\right\}_{t\geqslant 0},\mathbb{P})$: a complete filtered probability space. \\
 $\bullet$ $ L^{p}\bigl(\Omega; C\bigl([0,T];C(\mathcal{\bar{L}};\mathbb{R}^{4})\bigr)\bigr)$: the space of all predictable $ C(\mathcal{\bar{L}};\mathbb{R}^{4})$-valued processes in $C\bigl([0,T];C(\mathcal{\bar{L}};\mathbb{R}^{4})\bigr)$, $\mathbb{P}$-a.s. with the norm $L_{t,p}$ as follows
 \begin{equation}
 \left| u\right|_{L_{t,p}}^{p}:=\mathbb{E} \displaystyle \sup_{ s\in [0,t]}\left| u(s)\right|_{C(\mathcal{\bar{L}};\mathbb{R}^{4})}^{p}, \notag
 \end{equation}\vspace{1ex}
 \quad where 
 \begin{equation}
 \left| u\right|_{C(\mathcal{\bar{L}};\mathbb{R}^{4})}=\Bigl( \sum_{i=1}^{4}\displaystyle \sup_{ x\in \mathcal{\bar{L}}}\left| u_{i}(x)\right|^{2} \Bigr)^{\frac{1}{2}}. \notag
 \end{equation}
 $\bullet$ Define $  L^{2}(\mathcal{L}; \mathbb{R})$-value Wiener processes
 \begin{equation}
 W_{i}(t)=\sum_{k=1}^{\infty }\sqrt{a_{k,i}}B_{k,i}(t)e_{k}, i=1,2,3,4 , \notag
 \end{equation}
 where $ \left\{ a_{k,i}\right\}_{k=1}^{\infty }$ are sequences of non-negative real numbers satisfying $a_{i}:=\sum_{k=1}^{\infty }a_{k,i}<\infty ,i=1,2,3,4$, $\left\{ e_{k}\right\}_{k=1}^{\infty }$ are orthonormal basis.\\
 $\bullet$ Define $ H$ is Banach space $ C(\mathcal{\bar{L}};\mathbb{R}) $, \\
 \indent  $ C_{0}:= \displaystyle \sup_{ k \in \mathbb{N}}\left| e_{k}\right|_{L^{\infty }( \mathcal{L};\mathbb{R})}=\sup_{ k \in \mathbb{N}} \underset{x \in \mathcal{L}}{\rm ess sup}e_{k}(x)<\infty$, \quad $ \left|u \right|_{H}:=\underset{x \in \mathcal{\bar{L}}}{\rm sup}\left| u(x)\right|$. \\
 $\bullet$ Let $A_{i}$ be Neumann realizations of $k_{i}\triangle$ in $ H$, $i=1,2,3,4$, $ A=(A_{1},A_{2}, A_{3},A_{4})$.\\

 \subsection{SPDE SEIRS Epidemic Model}
 We propose a SEIRS Epidemic Model using a system of Stochastic Partial Differential Equations given by
 \begin{equation}\label{1}
 \begin{cases}
 dS(t,x)=[k_{1}\triangle S(t,x) +\Lambda(x)-\mu _{1}(x)S(t,x)-\alpha(x) \frac{S(t,x)I(t,x)}{S(t,x)+I(t,x)+E(t,x)+R(t,x)}\\ \quad \quad \quad \quad+\beta(x)R(t,x)]dt+S(t,x)dW_{1}(t,x),\quad \rm in \; \mathbb{R^{+}}\times\mathcal{L}  ,\\
 dE(t,x)=[k_{2}\triangle E(t,x) -\mu _{2}(x)E(t,x)+ \alpha(x) \frac{S(t,x)I(t,x)}{S(t,x)+I(t,x)+E(t,x)+R(t,x)}\\ \quad \quad \quad \quad-\sigma(x)E(t,x)]dt+E(t,x)dW_{2}(t,x),\quad \rm in \; \mathbb{R^{+}}\times\mathcal{L} ,\\		
 dI(t,x)=[k_{3}\triangle I(t,x) -\mu _{3}(x)I(t,x)+\sigma(x)E(t,x)-\gamma(x)I(t,x)]dt\\ \quad \quad \quad \quad+I(t,x)dW_{3}(t,x),\quad \rm in \; \mathbb{R^{+}}\times\mathcal{L}  ,\\		
 dR(t,x)=[k_{4}\triangle R(t,x) -\mu _{4}(x)R(t,x)+\gamma(x)I(t,x)-\beta(x)R(t,x)]dt\\ \quad \quad \quad \quad+R(t,x)dW_{4}(t,x),\quad \rm in \; \mathbb{R^{+}}\times \mathcal{L}  ,\\	   
 \partial_{\nu }S(t,x)= \partial_{\nu }E(t,x)=\partial_{\nu }I(t,x)=\partial_{\nu }R(t,x)=0, \quad \rm in \; \mathbb{R^{+}}\times \partial \mathcal{L} \\
 S(0,x)=S_{0}(x) , E(0,x)=E_{0}(x),  I(0,x)=I_{0}(x),  R(0,x)=R_{0}(x) \quad \rm in \; \mathcal{L}  ,
 \end{cases} 
 \end{equation}
 where $\triangle$ is the Laplacian with respect to the spatial variable, $	\partial_{\nu }S, \partial_{\nu }E, \partial_{\nu }I,\partial_{\nu }R$ denote the directional derivative with the $ \nu$ being outer normal direction on $ \partial \mathcal{L}$, $ k_{1}, k_{2}, k_{3}$ and $ k_{4}$ are positive constants representing the diffusion rates of the susceptible, exposed, infected and recovered population densities, $ \mu _{1}(x), \mu _{2}(x), \mu _{3}(x) $ and $ \mu _{4}(x)$ represents the death rates of above population densities, respectively. $\Lambda(x) $ is the recruitment rate of the population, $ \alpha(x)$ is the infection rate, $ \gamma(x)$ is the recovery rate, $ \beta(x)$ is the proportion of recovered individuals who lost immunity,
 $ \sigma(x)$ is the proportion of exposed individuals who become infected. In addition, $\Lambda(x), \mu _{1}(x), \mu _{2}(x), \mu _{3}(x), \mu _{4}(x), \alpha(x), \gamma(x), \beta(x), \sigma(x)$ are non-negative functions. $W_{1}(t,x), W_{2}(t,x)$, $W_{3}(t,x)$ and $W_{4}(t,x)$ are $  L^{2}(\mathcal{L}; \mathbb{R})$-value Wiener processes, they are also called Brownian sheet, and in some significants, their derivatives are space-time white noise.\\
 \indent  Define $ Au:=(A_{1}u_{1}, A_{2}u_{2}, A_{3}u_{3}, A_{4}u_{4})$, in which $ u=(u_{1}, u_{2}, u_{3}, u_{4}) \in L^{2}(\mathcal{L}; \mathbb{R}^{4})$, then $ A$ is infinitesimal generators of analytic semi-groups $e^{tA} $, and $e^{tA}u=(e^{tA_{1}}u_{1}, e^{tA_{2}}u_{2}, e^{tA_{3}}u_{3}, e^{tA_{4}}u_{4} ) $. We can rewrite equation (\ref{1}) as the stochastic differential equations in an infinite dimension space
 \begin{equation}
 \begin{cases} 
 dS(t)=[A_{1}S(t)+\Lambda-\mu _{1}S(t)- \frac{\alpha S(t)I(t)}{S(t)+I(t)+E(t)+R(t)}+\beta R(t)]dt+S(t)dW_{1}(t)  ,\vspace{1ex}\\
 dE(t)=[A_{2}E(t)-\mu _{2}E(t)+\frac{\alpha S(t)I(t)}{S(t)+I(t)+E(t)+R(t)}-\sigma E(t)]dt+E(t)dW_{2}(t)  ,\vspace{1ex}\\
 dI(t)=[A_{3}I(t)-\mu _{3}I(t)+\sigma E(t)-\gamma I(t))]dt+I(t)W_{3}(t),\vspace{1ex}\\
 dR(t)=[A_{4}E(t)-\mu _{4}R(t)+\gamma I(t)-\beta R(t))]dt+R(t)W_{4}(t), \vspace{1ex}\\
 S(0)=S_{0}, E(0)=E_{0},  I(0)=I_{0},  R(0)=R_{0}.\label{2}
 \end{cases} 
 \end{equation}
 \begin{remark}
 	This doesn't mean the equation (\ref{2}) is independent of $x$, it is just written in the form of an infinite dimension dynamics system.
 \end{remark}
 \begin{definition}
 	$ (S(t), E(t), I(t), R(t)) $ is a \textbf{mild solution} to (\ref{2}), if 
 	\begin{equation}
 	\begin{cases} 
 	S(t)=e^{tA_{1}}S_{0}+\int_{0}^{t}e^{(t-s)A_{1}}(\Lambda-\mu _{1}S(s)- \frac{\alpha S(s)I(s)}{S(s)+I(s)+E(s)+R(s)}+\beta R(s))ds+W_{S}(t)  ,\vspace{0.5ex}\\
 	E(t)=e^{tA_{2}}E_{0}+\int_{0}^{t}e^{(t-s)A_{2}}(-\mu _{2}E(s)+ \frac{\alpha S(s)I(s)}{S(s)+I(s)+E(s)+R(s)}-\sigma E(s))ds+W_{E}(t),\vspace{0.5ex}\\
 	I(t)=e^{tA_{3}}I_{0}+\int_{0}^{t}e^{(t-s)A_{3}}(-\mu _{3}I(s)+\sigma E(s)-\gamma I(s))ds+W_{I}(t),\vspace{0.5ex}\\
 	R(t)=e^{tA_{4}}R_{0}+\int_{0}^{t}e^{(t-s)A_{4}}(-\mu _{4}R(s)+\gamma I(s)-\beta R(s))ds+W_{R}(t), \label{m}
 	\end{cases} 
 	\end{equation}
 	where
 	\begin{equation}
 	W_{S}(t)=\int_{0}^{t}e^{(t-s)A_{1}}S(s) dW_{1}(t), \quad W_{E}(t)=\int_{0}^{t}e^{(t-s)A_{2}}E(s) dW_{2}(t), \notag
 	\end{equation}	
 	\begin{equation}
 	W_{I}(t)=\int_{0}^{t}e^{(t-s)A_{3}}I(s) dW_{3}(t), \quad W_{R}(t)=\int_{0}^{t}e^{(t-s)A_{4}}R(s) dW_{4}(t). \notag
 	\end{equation}
 	\indent Define a \textbf{positive mild solution} of (\ref{2}) as a mild solution $ S(t), E(t), I(t), R(t)$ such that $ S(t), E(t)$, $I(t), R(t) \geqslant 0$, almost everywhere $ x \in \mathcal{L}$, for all $ t \geqslant 0$. In this paper, we only consider positive mild solution. Furthermore, to make the term $\frac{si}{s+i+e+r}$ well defined, assume that $ \frac{si}{s+i+e+r}=0$ when $ s=0$ or $i=0$.
 \end{definition}
 
 \indent To make the analysis more clear an convenient, we rewrite the mild solution (\ref{m}) in the following vector form:
 \begin{equation}
 V(t)=e^{tA}V_{0}+\int_{0}^{t}e^{(t-s)A}G(V(s))ds+\int_{0}^{t}e^{(t-s)A}(V(s))dW(s), \label{*}
 \end{equation}
 in which 
 \begin{equation}
 V=(S,E,I,R), \quad V_{0}=(S_{0},E_{0},I_{0},R_{0}), \notag
 \end{equation}
 \begin{equation}
 \begin{split}
 &G(V)=(G_{1}(V), G_{2}(V), G_{3}(V), G_{4}(V))\\ :=&\Bigl(\Lambda-\mu _{1}S- \frac{\alpha SI}{S+I+E+R}+\beta R, -\mu _{2}E+ \frac{\alpha SI}{S+I+E+R}-\sigma E, -\mu _{3}I+\sigma E-\gamma I, -\mu _{4}R+\gamma I-\beta R \Bigr), \notag
 \end{split}
 \end{equation}
 \begin{equation}
 \begin{split}
 &e^{(t-s)A}(V(s))dW(s)\\:=&\Bigl(e^{(t-s)A_{1}}(S(s))dW_{1}(s), e^{(t-s)A_{2}}(E(s))dW_{2}(s), e^{(t-s)A_{3}}(I(s))dW_{3}(s), e^{(t-s)A_{4}}(R(s))dW_{4}(s)\Bigr). \notag
 \end{split}
 \end{equation}

 \begin{definition}
 	The infected category is said to be \textbf{permanent}, if there exists a positive number $ R$, is independent of initial values, such that
 	\begin{equation}
 	\displaystyle \liminf_{ t\to \infty }\frac{1}{t}\int_{0}^{t}\Bigl(\mathbb{E}\int _{\mathcal{O}}\bigl((I(s,x)+E(s,x))^{2} \wedge 1\bigr)dx\Bigr)^{\frac{1}{2}}ds\geqslant R, \notag	
 	\end{equation} 
 	and that is said to be \textbf{extinct}, if 
 	\begin{equation}
 	\displaystyle \limsup_{ t\to \infty }\frac{1}{t}\int_{0}^{t}\mathbb{E}\int _{\mathcal{O}}(I(s,x)+E(s,x))dxds=0. \notag	
 	\end{equation} 
 	
 \end{definition}
 
 \section{Main Results}
 \indent In this section, we give our main results: in the means of mild solution, we construct a contraction mapping and prove the well-posedness for the stochastic partial differential equation using the fixed point argument and the factorization method. We also find sufficient conditions for permanence and extinction to determine the longtime behavior of the solution.. 
 
 \subsection{Well-posedness}
 \begin{theorem}
 	There exists a unique positive mild solution $ (S(t), E(t), I(t), R(t)) $ of (\ref{2}), which is in
 	$ L^{p}\bigl(\Omega; C\bigl([0,T];C(\mathcal{\bar{L}};\mathbb{R}^{4})\bigr)\bigr)$, for any $ T>0, p \geqslant 1$ and any initial value $ 0 \leqslant S_{0}, E_{0}, I_{0}, R_{0} \in C(\mathcal{\bar{L}};\mathbb{R}) $. Moreover, this solution depends on the initial value continuously.	
 \end{theorem}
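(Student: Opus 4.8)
The plan is to obtain the solution as the unique fixed point of the map $\Gamma$ given by the right-hand side of \eqref{*}, acting on the Banach space $X_{T,p}:=L^{p}\bigl(\Omega;C([0,T];C(\bar{\mathcal L};\mathbb{R}^{4}))\bigr)$ with norm $|\cdot|_{L_{T,p}}$. First I would record the facts used throughout: each $e^{tA_{i}}$ is a strongly continuous semigroup of positivity-preserving contractions on $H:=C(\bar{\mathcal L})$ (the Neumann realization of $k_{i}\triangle$); the coefficients $\Lambda,\mu_{i},\alpha,\beta,\gamma,\sigma$ are bounded on $\bar{\mathcal L}$; and the only nonlinear term $\frac{\alpha SI}{S+I+E+R}$ is globally Lipschitz on the positive cone (bounded gradient there, and dominated by $\min(S,I)$). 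Since $G$ is defined only on that cone, I would run the fixed point for the drift $\widetilde G:\mathbb{R}^{4}\to\mathbb{R}^{4}$ obtained by inserting positive parts (replacing $\beta R,\gamma I,\sigma E$ by $\beta R^{+},\gamma I^{+},\sigma E^{+}$ and $\frac{\alpha SI}{S+I+E+R}$ by $\frac{\alpha S^{+}I^{+}}{S^{+}+E^{+}+I^{+}+R^{+}}$): then $\widetilde G$ is globally Lipschitz with linear growth, agrees with $G$ on the cone, and each component decomposes as $\widetilde G_{i}(V)=-c_{i}V_{i}+h_{i}(V)$ with $c_{i}=c_{i}(x)\ge0$ and $\langle h_{i}(V),V_{i}^{-}\rangle\ge0$ — the latter making the positivity step below non-circular.

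The analytic heart is the sup-norm estimate of the stochastic convolution, for which I would use the factorization method. For suitable $\alpha\in(0,\frac12)$ and $q>1/\alpha$, write $\int_{0}^{t}e^{(t-s)A}V(s)\,dW(s)=c_{\alpha}\int_{0}^{t}(t-s)^{\alpha-1}e^{(t-s)A}Y_{\alpha}(s)\,ds$ with $Y_{\alpha}(s)=\int_{0}^{s}(s-r)^{-\alpha}e^{(s-r)A}V(r)\,dW(r)$; the operator $f\mapsto\int_{0}^{\cdot}(\cdot-s)^{\alpha-1}e^{(\cdot-s)A}f(s)\,ds$ is bounded from $L^{q}(0,T;H)$ into $C([0,T];H)$, which both supplies the required time--space continuity of the stochastic term and pulls $\sup_{t\le T}$ outside the stochastic integral. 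Applying Burkholder--Davis--Gundy to $Y_{\alpha}(s)$ pointwise in $s$, and using that $e^{tA}$ is a contraction on $H$ together with $a_{i}=\sum_{k}a_{k,i}<\infty$ and $C_{0}=\sup_{k}|e_{k}|_{L^{\infty}}<\infty$ (so $\sum_{k}a_{k,i}|V(s)e_{k}|_{H}^{2}\le C_{0}^{2}a_{i}|V(s)|_{H}^{2}$), one gets $\mathbb{E}\sup_{t\le T}\bigl|\int_{0}^{t}e^{(t-s)A}V(s)\,dW(s)\bigr|_{H}^{p}\lesssim_{T}\int_{0}^{T}\mathbb{E}\sup_{r\le s}|V(r)|_{H}^{p}\,ds$. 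Combining this with $\sup_{t\le T}|e^{tA}V_{0}|_{H}\le|V_{0}|_{H}$ and the Lipschitz/linear-growth bounds for $\int_{0}^{t}e^{(t-s)A}\widetilde G(V(s))\,ds$ (Jensen in $s$), I obtain
\[
|\Gamma V|_{L_{t,p}}^{p}\lesssim_{T}1+|V_{0}|_{H}^{p}+\int_{0}^{t}|V|_{L_{s,p}}^{p}\,ds,\qquad |\Gamma V-\Gamma\widetilde V|_{L_{t,p}}^{p}\lesssim_{T}\int_{0}^{t}|V-\widetilde V|_{L_{s,p}}^{p}\,ds .
\]
Gronwall's inequality shows $\Gamma$ maps $X_{T,p}$ (predictable processes) into itself, and iterating the second estimate makes $\Gamma^{n}$ a strict contraction for large $n$; hence there is a unique fixed point $V\in X_{T,p}$ for every $T>0$ and $p\ge1$ (first for $p$ large, then for all $p\ge1$ by Jensen in $\Omega$ together with the moment bound), i.e.\ a unique mild solution of the modified system.

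It remains to show $V$ is cone-valued, so that $\widetilde G(V)=G(V)$ and $V$ solves \eqref{2}; this is the step I expect to be the main obstacle. Because of the positive-part truncation, each component satisfies $dV_{i}=(A_{i}V_{i}-c_{i}V_{i}+h_{i})\,dt+V_{i}\,dW_{i}$ with $c_{i}\ge0$, $\langle h_{i},V_{i}^{-}\rangle\ge0$ and $V_{i}(0)\ge0$, separately and with no circular coupling. I would apply It\^o's formula to $\frac12|V_{i}(t)^{-}|_{L^{2}(\mathcal L)}^{2}$ --- made rigorous only after a Yosida (or Galerkin) regularization of $A_{i}$, since $V$ is merely a mild solution --- and use $-\langle A_{i}V_{i},V_{i}^{-}\rangle\le0$ (Neumann boundary condition), $\langle c_{i}V_{i},V_{i}^{-}\rangle=-\langle c_{i}V_{i}^{-},V_{i}^{-}\rangle\le0$, $\langle h_{i},V_{i}^{-}\rangle\ge0$, and the It\^o correction $\frac12\sum_{k}a_{k,i}|V_{i}^{-}e_{k}|_{L^{2}}^{2}\le\frac12 C_{0}^{2}a_{i}|V_{i}^{-}|_{L^{2}}^{2}$, to reach $\frac{d}{dt}\mathbb{E}|V_{i}(t)^{-}|_{L^{2}}^{2}\le C\,\mathbb{E}|V_{i}(t)^{-}|_{L^{2}}^{2}$, whence $V_{i}(t)^{-}\equiv0$ by Gronwall. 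Uniqueness for \eqref{2} then follows because any two positive mild solutions are mild solutions of the modified system. Finally, continuous dependence is read off from the same estimates applied to the difference of two solutions: \eqref{*} gives $|V-\widetilde V|_{L_{t,p}}^{p}\lesssim_{T}|V_{0}-\widetilde V_{0}|_{H}^{p}+\int_{0}^{t}|V-\widetilde V|_{L_{s,p}}^{p}\,ds$ and Gronwall yields $|V-\widetilde V|_{L_{T,p}}\le C_{T}\,|V_{0}-\widetilde V_{0}|_{C(\bar{\mathcal L};\mathbb{R}^{4})}$, so the solution map is Lipschitz continuous in the initial data.
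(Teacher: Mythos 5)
Your proposal follows essentially the same route as the paper's proof: truncate the drift and diffusion by positive parts, obtain the truncated system's unique solution by a fixed-point argument in $L^{p}\bigl(\Omega;C([0,T];C(\bar{\mathcal L};\mathbb{R}^{4}))\bigr)$ with the stochastic convolution controlled by the factorization method plus BDG (for $p$ large first), recover positivity by a regularization combined with a convex functional vanishing on the positive half-line, and read off continuous dependence from Gronwall. The only cosmetic differences are that you iterate $\Gamma^{n}$ rather than shrinking $T_{0}$ and restarting on $[kT_{0},(k+1)T_{0}]$, and that you test positivity with $|V_{i}^{-}|_{L^{2}}^{2}$ where the paper uses a $C^{2}$ surrogate $g$ together with the smooth truncation $\Phi$ and resolvent approximations --- the same idea in slightly different clothing.
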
        
 
 \noindent \textbf{Outline of Proof of Theorem 3.1}:\\	
 $\blacktriangleright$ Step1. Construct a contraction mapping \\
 $\blacktriangleright$ Step2. Prove the positivity of the mild solution\\
 $\blacktriangleright$ Step3. Prove the continuous dependence of the solution on initial values.\\
 
 \noindent \textit{\textbf{Proof of Theorem 3.1}}:\\
 \noindent \textbf{Step1}. \textbf{Construct $ \tilde{\gamma}$ and prove it is a contraction mapping}.\\
 \indent Define
 \begin{equation}
 \begin{split}
 g(x,s,e,i,r)=\Bigl(\Lambda(x)&-\mu_{1}(x)s-\frac{\alpha (x)si}{s+e+i+r}+\beta(x)r,\; -\mu_{2}(x)e+\frac{\alpha (x)si}{s+e+i+r}-\sigma(x)e, \;\\&-\mu_{3}(x)i+\sigma(x)e-\gamma(x)i,\;-\mu_{4}(x)r+\gamma(x)i-\beta(x)r  \Bigr), \notag
 \end{split}
 \end{equation}
 \begin{equation}
 g^{*}(x,s,e,i,r)=f(x,s\vee 0,e \vee 0,i \vee 0,r \vee 0), \quad v=(s,e,i,r).\notag
 \end{equation}
 \indent According to the previous assumption, obviously we can obtain $g^{*}(x,\cdot ,\cdot ,\cdot ,\cdot ): \mathbb{R}^{4}\to \mathbb{R}^{4} $ is Lipschitz continuous, uniformly in $ x \in \mathcal{O}$. Therefore, $G^{*}(v)(x)= \bigl( G_{1}^{*}(v)(x), G_{2}^{*}(v)(x), G_{3}^{*}(v)(x), G_{4}^{*}(v)(x) \bigr):=g^{*}(x,v(x)), x \in \mathcal{L} $ is Lipschitz continuous, both in $  L^{2}(\mathcal{L}; \mathbb{R}^{4})$ and $C(\mathcal{\bar{L}};\mathbb{R}^{4}) $.\\
 \indent Now, we consider the following problem
 \begin{equation}
 dV^{*}(t)=[AV^{*}(t)+G^{*}(V^{*}(t))]dt+(V^{*}(t) \vee 0)dW(t), \; V^{*}(0)=V_{0}=(S_{0},E_{0},I_{0},R_{0}), \label{3}
 \end{equation}
 where
 \begin{equation}
 V^{*}(t)=\bigl(S^{*}(t), E^{*}(t), I^{*}(t), R^{*}(t)\bigr), \notag
 \end{equation}
 \begin{equation}
 (V^{*}(t) \vee 0)(x)=\bigl(S^{*}(t,x) \vee 0, E^{*}(t,x) \vee 0, I^{*}(t,x) \vee 0, R^{*}(t,x) \vee 0\bigr). \notag
 \end{equation}
 For any $u(t,x)=\Bigl(u_{1}(t,x), u_{2}(t,x), u_{3}(t,x), u_{4}(t,x)\Bigr) \in  L^{p}\Bigl(\Omega; C\bigl([0,T];C(\mathcal{\bar{L}};\mathbb{R}^{4})\bigr)\Bigr)$, we consider the mapping 
 \begin{equation}
 \tilde{\gamma}(u)(t):=e^{tA}V_{0}+\int_{0}^{t}e^{(t-s)A}G^{*}(u(s))ds+\tilde{\varphi } (u)(t), \notag
 \end{equation}
 where
 \begin{equation}
 \begin{split}
 \tilde{\varphi } (u)(t):&=\int_{0}^{t}e^{(t-s)A}(u(s) \vee 0)dW(s)\\
 &=\Bigl(  \int_{0}^{t}e^{(t-s)A_{1}}(u_{1}(s) \vee 0)dW_{1}(s) ,\int_{0}^{t}e^{(t-s)A_{2}}(u_{2}(s) \vee 0)dW_{2}(s) , \\&\int_{0}^{t}e^{(t-s)A_{3}}(u_{3}(s) \vee 0)dW_{3}(s) , \int_{0}^{t}e^{(t-s)A_{4}}(u_{4}(s) \vee 0)dW_{4}(s)    \Bigr). \notag
 \end{split}
 \end{equation}	
 \indent  \textbf{If} we can prove $ \tilde{\gamma}$ is a contraction mapping in $ L^{p}\bigl(\Omega; C\bigl([0,T_{0}];C(\mathcal{\bar{L}};\mathbb{R}^{4})\bigr)\bigr)$, for some $ T_{0}>0$, and any $ p \geqslant p_{0}$ for some $ p_{0}$, then by a fixed point argument we obtain that equation (\ref{3}) admits a unique mild solution. Thus, by repeating the above argument in each finite time interval $ [kT_{0},(k+1)T_{0}]$, for any $ T>0, p \geqslant p_{0}$ the equation (\ref{3}) admits a unique solution $ V^{*}(t)=\bigl(S^{*}(t), E^{*}(t), I^{*}(t), R^{*}(t)\bigr)$ in $ L^{p}\bigl(\Omega; C\bigl([0,T];C(\mathcal{\bar{L}};\mathbb{R}^{4})\bigr)\bigr)$. The key point is to estimate $ \tilde{\varphi } (u)(t)$. We need the following lemma to complete the proof of contraction mapping.
 \begin{lemma}
 	There exists $ p_{0}$ such that for any $ p \geqslant p_{0}$, the mapping $ \tilde{\varphi }$ maps $ L^{p}\bigl(\Omega; C\bigl([0,t];C(\mathcal{\bar{L}};\mathbb{R}^{4})\bigr)\bigr)$ into itself, and for any $ u=(u_{1}, u_{2}, u_{3}, u_{4}), \; v=(v_{1}, v_{2}, v_{3}, v_{4})$ $ \in L^{p}\bigl(\Omega; C\bigl([0,t];C(\mathcal{\bar{L}};\mathbb{R}^{4})\bigr)\bigr) $,
 	\begin{equation}
 	\left|\tilde{\varphi }(u)-\tilde{\varphi } (v)\right|_{L_{t,p}}\leqslant c_{p}(t)\left| u-v\right|_{L_{t,p}}, \notag
 	\end{equation}
 	where $ c_{p}(t)$ is some constant satisfying $ c_{p}(t) \downarrow 0$ as $ t\downarrow 0$.
 \end{lemma}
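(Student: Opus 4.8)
The plan is to prove both assertions by the \emph{factorization method} of Da Prato--Kwap\-ie\'n--Zabczyk. Fix an exponent $\alpha\in(0,1/2)$, to be constrained below, and recall the identity
\begin{equation*}
\tilde{\varphi}(u)(t)=\frac{\sin\pi\alpha}{\pi}\int_{0}^{t}(t-s)^{\alpha-1}e^{(t-s)A}Y_{\alpha}(u)(s)\,ds,\qquad Y_{\alpha}(u)(s):=\int_{0}^{s}(s-\sigma)^{-\alpha}e^{(s-\sigma)A}(u(\sigma)\vee 0)\,dW(\sigma),
\end{equation*}
which is valid (component-wise) once $Y_{\alpha}(u)$ is shown to have finite moments. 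Since $(a\vee 0)-(b\vee 0)\le|a-b|$ pointwise and $\tilde{\varphi}$ acts linearly on $u\vee 0$, we have $\tilde{\varphi}(u)-\tilde{\varphi}(v)=\int_{0}^{t}e^{(t-s)A}\bigl((u(s)\vee 0)-(v(s)\vee 0)\bigr)dW(s)$; hence the claimed Lipschitz bound will follow from the single ``linear'' estimate $|\tilde{\varphi}(u)|_{L_{t,p}}\le c_{p}(t)|u|_{L_{t,p}}$ applied with $u-v$ in place of $u$, and it suffices to establish that estimate together with the mapping property.

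First I would estimate $Y_{\alpha}(u)$. Working component-wise and applying the Burkholder--Davis--Gundy inequality to $|Y_{\alpha}(u)(s)(x)|^{p}$ (using a jointly measurable version of the convolution), together with $\sup_{k}|e_{k}|_{L^{\infty}}=C_{0}<\infty$, $\sum_{k}a_{k,i}=a_{i}<\infty$, and the fact that each $e^{\tau A_{i}}$ is sub-Markovian (so $|e^{\tau A_{i}}w|_{\infty}\le|w|_{\infty}$), one gets
\begin{equation*}
\mathbb{E}\,|Y_{\alpha}(u)(s)(x)|^{p}\le C_{p}\Bigl(\textstyle\sum_{i}a_{i}C_{0}^{2}\Bigr)^{p/2}\Bigl(\int_{0}^{s}(s-\sigma)^{-2\alpha}d\sigma\Bigr)^{p/2}\,\mathbb{E}\sup_{\sigma\le s}|u(\sigma)|_{C(\mathcal{\bar{L}};\mathbb{R}^{4})}^{p},
\end{equation*}
where the $\sigma$-integral is finite precisely because $2\alpha<1$ and equals $C\,s^{(1-2\alpha)p/2}$. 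Since $|\mathcal{L}|=1$, Jensen's inequality gives $\mathbb{E}|Y_{\alpha}(u)(s)|_{L^{q}(\mathcal{L};\mathbb{R}^{4})}^{p}\le\sup_{x}\mathbb{E}|Y_{\alpha}(u)(s)(x)|^{p}$ for any $q\le p$, so $\mathbb{E}|Y_{\alpha}(u)(s)|_{L^{q}}^{p}\le C\,s^{(1-2\alpha)p/2}|u|_{L_{s,p}}^{p}$; in particular $Y_{\alpha}(u)\in L^{p}\bigl(\Omega;L^{p}(0,T;L^{q}(\mathcal{L};\mathbb{R}^{4}))\bigr)$ and the factorization identity is justified.

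Next I would run the outer integral. Ultracontractivity of the Neumann heat semigroup on the bounded $C^{2}$ domain $\mathcal{L}$ gives $\|e^{\tau A_{i}}\|_{L^{q}(\mathcal{L})\to C(\mathcal{\bar{L}})}\le M_{T}\,\tau^{-d/(2q)}$ on $[0,T]$, so by H\"older's inequality in $s$ with conjugate exponent $p'$,
\begin{equation*}
\sup_{\tau\le t}\bigl|\tilde{\varphi}(u)(\tau)\bigr|_{C(\mathcal{\bar{L}};\mathbb{R}^{4})}^{p}\le C\,t^{(\alpha-d/(2q)-1/p)p}\int_{0}^{t}\bigl|Y_{\alpha}(u)(s)\bigr|_{L^{q}}^{p}\,ds,
\end{equation*}
provided $(\alpha-1-d/(2q))p'>-1$, i.e.\ $\alpha>\tfrac1p+\tfrac{d}{2q}$. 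Taking expectations and inserting the bound for $Y_{\alpha}(u)$, the powers of $t$ combine to $|\tilde{\varphi}(u)|_{L_{t,p}}^{p}\le C\,t^{\theta p}|u|_{L_{t,p}}^{p}$ with $\theta=\tfrac12-\tfrac{d}{2q}$, so $c_{p}(t)=C^{1/p}t^{1/2-d/(2q)}\downarrow 0$ as $t\downarrow 0$. It then remains only to fix parameters: choose $q>d$ (so $\theta>0$), then $\alpha\in(\tfrac{d}{2q},\tfrac12)$, and finally $p_{0}\ge\max\{q,\ (\alpha-\tfrac{d}{2q})^{-1}\}$, which makes all of $2\alpha<1$, $\alpha>\tfrac1p+\tfrac{d}{2q}$, $q\le p$ hold for every $p\ge p_{0}$. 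The membership $\tilde{\varphi}(u)\in L^{p}\bigl(\Omega;C([0,t];C(\mathcal{\bar{L}};\mathbb{R}^{4}))\bigr)$ also requires continuity of $t\mapsto\tilde{\varphi}(u)(t)$ in $C(\mathcal{\bar{L}};\mathbb{R}^{4})$: after the substitution $r=t-s$ the integrand $r^{\alpha-1}e^{rA}Y_{\alpha}(u)(t-r)$ has $C(\mathcal{\bar{L}})$-norm dominated by the integrable $r^{\alpha-1-d/(2q)}|Y_{\alpha}(u)(t-r)|_{L^{q}}$, and strong continuity of $e^{rA}$ plus dominated convergence yield continuity in $t$; predictability is inherited from the stochastic convolution.

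The main obstacle is exactly the moment estimate for the rough multiplicative stochastic convolution $Y_{\alpha}(u)$: because the noise has only finite trace on $L^{2}(\mathcal{L})$ (its spatial ``white'' part contributes no smoothing), we must detour through $L^{q}$ with $q$ large and accept the constraint $2\alpha<1$, while the return to $C(\mathcal{\bar{L}})$ costs the factor $\tau^{-d/(2q)}$ and forces $\alpha>\tfrac1p+\tfrac{d}{2q}$; reconciling these is why the contraction holds only for $p\ge p_{0}$ and on a short interval. Everything else --- the reduction of the Lipschitz estimate to the linear one, the bookkeeping of the constants, and the time-continuity --- is routine once this estimate is in hand.
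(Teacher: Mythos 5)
Your proposal is correct and follows essentially the same route as the paper: the factorization identity for the stochastic convolution, a pointwise Burkholder--Davis--Gundy estimate using $\sum_k a_{k,i}<\infty$ and $\sup_k|e_k|_{L^\infty}=C_0$, then H\"older in time and the $L^q\to C(\mathcal{\bar L})$ smoothing of the semigroup (your ultracontractivity bound plays the role of the paper's ``Sobolev embedding''), yielding $c_p(t)\sim t^{1/2-d/(2q)}$ for $p\geqslant p_0$. The only cosmetic difference is that the paper estimates the difference $M_\alpha(u,v)$ directly while you reduce to a ``linear'' estimate; since $\tilde\varphi$ is not linear in $u$ this reduction should be phrased as applying the same BDG argument to the process $(u\vee 0)-(v\vee 0)$ and using $|(a\vee 0)-(b\vee 0)|\leqslant|a-b|$, which is exactly what your computation does.
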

 \noindent $ \bigstar$ \textbf{The main ingredients for the proof of Lemma 3.1}\\
 
 \indent Due to $ \tilde{\varphi } (u)(t)$ is not a stochastic integral, the martingale method is not valid, so we need to use the factorization method which was first gave by Da Prato and Zabczyk \cite{GDaPratoandJZabczyk, GDaPratoandLTubaro} to prove our results.\\
 Based on the equality
 \begin{equation}
 \int_{\sigma }^{t}(t-s)^{\alpha -1}(s-\sigma )^{-\alpha }ds=\frac{\pi }{\rm sin\pi \alpha }, \notag	
 \end{equation} 
 by the stochastic Fubini theorem, we have
 \begin{equation}
 \tilde{\varphi }(u)(t)-\tilde{\varphi } (v)(t)=\frac{\rm sin\pi \alpha  }{\pi } \int_{0}^{t}(t-s)^{\alpha -1}e^{(t-s)A}M_{\alpha }(u,v)(s)ds,\notag
 \end{equation} 
 where
 \begin{equation}
 M_{\alpha }(u,v)(s)=\int_{0}^{s} (s-r)^{-\alpha }e^{(s-r)A}\bigl(u(r)\vee 0-v(r)\vee 0\bigr)dW(r)	.\notag
 \end{equation}
 Then, using BDG inequality and the properties of semi-group \cite{WArendt, SCerrai}, easy to prove
 \begin{equation}
 \int_{0}^{t}\left|M_{\alpha }(u,v)(s) \right|^{p}_{L^{p}(\mathcal{L};\mathbb{R}^{4})}ds<\infty  \quad \rm a.s. . \notag
 \end{equation}
 Therefore, we can apply Holder inequality and Sobolev embedding theorem to obtain
 \begin{equation}
 \textcolor{black}{	\left|\tilde{\varphi }(u)(t)-\tilde{\varphi } (v)(t)\right|_{L_{t,p}} \leqslant c_{ p}(t)\left|u-v\right|_{L_{t,p}}}. \notag
 \end{equation} 
 Lemma 3.1 is proved.\\
 \indent Using the properties of analytic semi-group \cite{EBDavies, EMOuhabaz} and the Lipschitz continuous of $ G^{*}$, we obtain
 \begin{equation}
 \begin{split}
 \int_{0}^{t}\left|e^{(s-t)A}(G^{*}(u(s))-G^{*}(v(s))) \right|^{p}_{L(\bar{\mathcal{L}};\mathbb{R}^{4})}ds&\leqslant c\int_{0}^{t}\left|u(s)-v(s) \right|^{p}_{C(\bar{\mathcal{L}};\mathbb{R}^{4})} \\&\leqslant c\int_{0}^{t}\displaystyle \sup_{ r\in [0,s]}\left|u(s)-v(s) \right|^{p}_{C(\bar{\mathcal{L}};\mathbb{R}^{4})}\\& \leqslant ct\cdot \sup_{ s\in [0,t]}\left|u(s)-v(s) \right|^{p}_{C(\bar{\mathcal{L}};\mathbb{R}^{4})}. \label{f}
 \end{split}
 \end{equation}
 Combining (\ref{f}) and Lemma 3.1, 
 \begin{equation}
 \left|  \tilde{\gamma}(u)- \tilde{\gamma}(v)\right|_{L_{t,p}}\leqslant c_{p}(t)\left| u-v\right|_{L_{t,p}}, \notag
 \end{equation} 
 in which $ c_{p}(t)\downarrow 0(t\downarrow 0) $.\\
 \indent We have proved $ \tilde{\gamma}$ is a contraction mapping in $ L^{p}\bigl(\Omega; C\bigl([0,T_{0}];C(\mathcal{\bar{L}};\mathbb{R}^{4})\bigr)\bigr)$, for some $ T_{0}>0$. Therefore, Step1 has been completed.
 
 \noindent $\blacktriangleright$ \textbf{Step2}.  \textbf{Prove the positivity of $ S^{*}(t), E^{*}(t), I^{*}(t), R^{*}(t)$}.\\
 $ \bullet$ For $ i=1,2,3,4$, let $ \lambda_{i} \in \rho (A_{i})$ be the resolvent set of $ A_{i}$ and $ R_{i}(\lambda_{i}):= \lambda_{i}R_{i}(\lambda_{i},A_{i})$, with $ R_{i}(\lambda_{i},A_{i})$ being the resolvent of $ A_{i}$. For each small $ \varepsilon >0$, there exists a unique strong solution \cite{K. Liu} $S_{\lambda ,\varepsilon }(t,x), E_{\lambda ,\varepsilon }(t,x),$ $ I_{\lambda ,\varepsilon }(t,x)$, $R_{\lambda ,\varepsilon }(t,x) $ of the equation
 \begin{equation}
 \begin{cases} 
 dS_{\lambda ,\varepsilon }(t)=[A_{1}S_{\lambda ,\varepsilon }(t)+R_{1}(\lambda _{1})F_{1}\left ( \varepsilon \Phi (\varepsilon ^{-1}S_{\lambda ,\varepsilon }(t)),  \varepsilon \Phi (\varepsilon ^{-1}E_{\lambda ,\varepsilon }(t)),  \varepsilon \Phi (\varepsilon ^{-1}I_{\lambda ,\varepsilon }(t)), \varepsilon \Phi (\varepsilon ^{-1}R_{\lambda ,\varepsilon }(t))\right )]dt\\ \quad \qquad \qquad \qquad \qquad+R_{1}(\lambda _{1})\varepsilon \Phi (\varepsilon ^{-1}S_{\lambda ,\varepsilon }(t))dW_{1}(t)  ,\vspace{1ex}\\
 
 dE_{\lambda ,\varepsilon }(t)=[A_{2}E_{\lambda ,\varepsilon }(t)+R_{2}(\lambda _{2})F_{2}\left ( \varepsilon \Phi (\varepsilon ^{-1}S_{\lambda ,\varepsilon }(t)),  \varepsilon \Phi (\varepsilon ^{-1}E_{\lambda ,\varepsilon }(t)),  \varepsilon \Phi (\varepsilon ^{-1}I_{\lambda ,\varepsilon }(t)), \varepsilon \Phi (\varepsilon ^{-1}R_{\lambda ,\varepsilon }(t))\right )]dt\\ \quad \qquad \qquad \qquad \qquad+R_{2}(\lambda _{2})\varepsilon \Phi (\varepsilon ^{-1}E_{\lambda ,\varepsilon }(t))dW_{2}(t)  ,\vspace{1ex}\\
 
 dI_{\lambda ,\varepsilon }(t)=[A_{3}I_{\lambda ,\varepsilon }(t)+R_{3}(\lambda _{3})F_{3}\left ( \varepsilon \Phi (\varepsilon ^{-1}S_{\lambda ,\varepsilon }(t)),  \varepsilon \Phi (\varepsilon ^{-1}E_{\lambda ,\varepsilon }(t)),  \varepsilon \Phi (\varepsilon ^{-1}I_{\lambda ,\varepsilon }(t)), \varepsilon \Phi (\varepsilon ^{-1}R_{\lambda ,\varepsilon }(t))\right )]dt\\ \quad \qquad \qquad \qquad \qquad+R_{3}(\lambda _{3})\varepsilon \Phi (\varepsilon ^{-1}I_{\lambda ,\varepsilon }(t))dW_{3}(t)  ,\vspace{1ex}\\
 
 dR_{\lambda ,\varepsilon }(t)=[A_{4}R_{\lambda ,\varepsilon }(t)+R_{4}(\lambda _{4})F_{4}\left ( \varepsilon \Phi (\varepsilon ^{-1}S_{\lambda ,\varepsilon }(t)),  \varepsilon \Phi (\varepsilon ^{-1}E_{\lambda ,\varepsilon }(t)),  \varepsilon \Phi (\varepsilon ^{-1}I_{\lambda ,\varepsilon }(t)), \varepsilon \Phi (\varepsilon ^{-1}R_{\lambda ,\varepsilon }(t))\right )]dt\\ \quad \qquad \qquad \qquad \qquad+R_{4}(\lambda _{4})\varepsilon \Phi (\varepsilon ^{-1}R_{\lambda ,\varepsilon }(t))dW_{4}(t)  ,\vspace{1ex}\\
 
 S_{\lambda ,\varepsilon }(0)=R_{1}(\lambda _{1})S_{0}, E_{\lambda ,\varepsilon }(0)=R_{2}(\lambda _{2})E_{0},  I_{\lambda ,\varepsilon }(0)=R_{3}(\lambda _{3})I_{0},  R_{\lambda ,\varepsilon }(0)=R_{4}(\lambda _{4})R_{0}. \notag
 \end{cases} 
 \end{equation}
 
 where 
 \begin{equation}
 \Phi(\xi)=\begin{cases} 
 \displaystyle 0, &\xi \leqslant 0,\\
 \displaystyle 3\xi ^{5}-8\xi ^{4}+6\xi ^{3},&0<\xi <1,\\
 \displaystyle \xi,&\xi \geqslant 1,
 \end{cases}  \notag
 \end{equation}		
 the construction of $\Phi$ is unique and satisfies $ \Phi\in C^{2}(\mathbb{R}),\varepsilon \Phi (\varepsilon ^{-1}\xi )\to \xi \vee 0\;(\varepsilon \to 0) $.\\
 \indent For some sequence $ \left\{\lambda (k) \right\}_{k=1}^{\infty }$, we obtain that
 \begin{equation}
 \left(S_{\lambda(k) ,\varepsilon }(t,x), E_{\lambda(k) ,\varepsilon }(t,x), I_{\lambda(k) ,\varepsilon }(t,x), R_{\lambda(k) ,\varepsilon }(t,x)\right) \rightarrow \bigl(S^{*}(t), E^{*}(t), I^{*}(t), R^{*}(t)\bigr) \quad (k \rightarrow \infty, \varepsilon \rightarrow 0). \notag
 \end{equation}
 \indent Let 
 \begin{equation}
 g(\xi )=\begin{cases}
 2\xi ^{2}+\frac{11}{3},\quad \xi \leqslant -1 \\
 -\xi ^{4}-\frac{8}{3}\xi ^{3},\quad -1<\xi <0\\
 0, \quad \xi \geqslant 0 \notag
 \end{cases}
 \end{equation}
 which satisfies $ g^{\prime }(\xi) \leqslant 0$ and $ g^{\prime \prime}(\xi) \geqslant 0$, the construction of $ g(\xi)$ is not unique. \\
 \indent For $ \forall \xi $, we have $ g^{\prime }(\xi)	\Phi(\xi)=g^{\prime \prime}(\xi)\Phi(\xi)=0$, then by Ito s formula in \cite{RFCurtain}, we get	
 \begin{equation}
 \int_{\mathcal{O}}g(S_{\lambda ,\varepsilon }(t,x))dx\leqslant 0 ,\int_{\mathcal{O}}g(E_{\lambda ,\varepsilon }(t,x))dx\leqslant 0, \int_{\mathcal{O}}g(I_{\lambda ,\varepsilon }(t,x))dx\leqslant 0,\int_{\mathcal{O}}g(R_{\lambda ,\varepsilon }(t,x))dx\leqslant 0. \notag
 \end{equation}
 Therefor, we have the positivity of $ S^{*}(t), E^{*}(t), I^{*}(t), R^{*}(t)$.
 
 \noindent $\blacktriangleright$ \textbf{Step3}. \textbf{Prove the continuous dependence of the solution on initial values}.\\
 \indent We use subscripts to indicate the dependence of the solution on initial value. Let $ V_{v_{0}}(t), V_{v_{0}^{'}}(t)$ be the positive mild solution of (\ref{*}) with the initial condition $ V(0)=v_{0}$ and $ V(0)=v_{0}^{'}$, then some estimates are given
 \begin{equation}
 \begin{split}
 &\mathbb{E}\displaystyle \sup_{ s \in [0,t]}\left|\int_{0}^{s}e^{(s-r)A}(V_{v_{0}}(r)-V_{v_{0}^{'}}(r))dW(r) \right|_{C(\mathcal{\bar{L}};\mathbb{R}^{4})}^{p}\\
 &\leqslant c_{p}(t)\int_{0}^{t} \mathbb{E}\displaystyle \sup_{ r \in [0,s]}\left| V_{v_{0}}(r)-V_{v_{0}^{'}}(r) \right|_{C(\mathcal{\bar{L}};\mathbb{R}^{4})}^{p}ds\\
 &\leqslant c_{p}(t) \int_{0}^{t}\left| V_{v_{0}}-V_{v_{0}^{'}} \right|_{L_{s,p}}^{p}ds, \notag
 \end{split}
 \end{equation}
 \begin{equation}
 \left| V_{v_{0}}-V_{v_{0}^{'}} \right|_{L_{t,p}}^{p} \leqslant c_{p}\left| V_{v_{0}}-V_{v_{0}^{'}} \right|_{C(\mathcal{\bar{L}};\mathbb{R}^{4})}^{p}	+ c_{p}(t) \int_{0}^{t}\left| V_{v_{0}}-V_{v_{0}^{'}} \right|_{L_{s,p}}^{p}ds. \notag
 \end{equation}
 By Gronwall's inequality, 
 \begin{equation}
 \left| V_{v_{0}}-V_{v_{0}^{'}} \right|_{L_{T,p}}^{p} \leqslant c_{p}(T)\left| v_{0}-v_{0}^{'} \right|_{C(\mathcal{\bar{L}};\mathbb{R}^{4})}^{p}. \notag
 \end{equation}
 $  \hfill\square $
 \subsection{Longtime Behavior}
 \indent Define $\Lambda _{*}=\displaystyle \inf_{ x\in \mathcal{\bar{L}}}\Lambda (x)$, $\hat{R}=\int _{\mathcal{L}}\frac{\alpha (x)}{2}-\bigl(\mu _{2}(x)+\mu _{3}(x)+\gamma (x)\bigr)dx-\frac{\tilde{a}}{2}$, where $ \tilde{a}= \rm max\left\{ a_{2},a_{3}\right\} $. 
 \begin{theorem}
 	If $ \Lambda _{*}>0$, $ \hat{R}>0$, and for all $ t \leqslant 0$,		
 	\begin{equation}
 	\begin{split}
 	\alpha(x)- \gamma(x) \geqslant 0, \quad & \frac{S(t,x)+R(t,x)}{I(t,x)+E(t,x)}\leqslant \frac{S(t,x)+R(t,x)+I(t,x)+E(t,x)}{2},\\&\frac{S(t,x)}{S(t,x)+R(t,x)+I(t,x)+E(t,x)}>\frac{\gamma(x) }{\alpha(x) }, \notag
 	\end{split}
 	\end{equation}
 	almost everywhere $ x \in \mathcal{L}$.	Then the \textbf{infected category} is \textbf{permanent} for any initial values $ 0 \leqslant S_{0}, E_{0}, I_{0}, R_{0} \in C(\mathcal{\bar{L}};\mathbb{R}) $ satisfying
 	\begin{equation}
 	\int _{\mathcal{L}}-ln(I_{0}(x)+E_{0}(x))dx < \infty, \notag
 	\end{equation} 
 	we also obtain
 	\begin{equation}
 	\displaystyle \liminf_{ t\to \infty }\frac{1}{t}\int_{0}^{t}\Bigl(\mathbb{E}\int _{\mathcal{L}}\bigl((I(s,x)+E(s,x))^{2}\wedge 1\bigr)dx\Bigr)^{\frac{1}{2}}ds\geqslant R>0, \notag	
 	\end{equation}
 	in which $ R$ is independent of initial values.
 \end{theorem}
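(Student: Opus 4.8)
The plan is to run the classical stochastic--Lyapunov argument for permanence with the functional $x\mapsto-\ln\bigl(I(t,x)+E(t,x)\bigr)$, spatially averaged and in expectation, and to distil from It\^o's formula the differential inequality
\begin{equation}
\frac{d}{dt}\,\mathbb{E}\int_{\mathcal{L}}-\ln\bigl(I(t,x)+E(t,x)\bigr)\,dx\ \le\ -\hat{R}+C\Bigl(\mathbb{E}\int_{\mathcal{L}}\bigl((I(t,x)+E(t,x))^{2}\wedge1\bigr)dx\Bigr)^{1/2},\notag
\end{equation}
with $C$ depending only on the coefficients; permanence with $R=\hat{R}/C$ then follows by time--averaging. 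Since $\ln$ is singular at $0$, It\^o's formula cannot be applied to the mild solution directly: I would first pass to the resolvent/Yosida approximations $(S_{\lambda,\varepsilon},E_{\lambda,\varepsilon},I_{\lambda,\varepsilon},R_{\lambda,\varepsilon})$ already used in Step 2 of the proof of Theorem 3.1, which are strong solutions, apply the It\^o formula of \cite{RFCurtain} to $-\ln(\cdot)$ of the strictly positive regularizations, and then let $\lambda\to\infty$, $\varepsilon\to0$; the positivity from Step 2, the standing hypothesis $\int_{\mathcal{L}}-\ln(I_{0}+E_{0})\,dx<\infty$ (which forces $I_{0}+E_{0}>0$ a.e.), and $\Lambda_{*}>0$ (which keeps the susceptible pool, hence the incidence term, from degenerating) ensure $I(t,x)+E(t,x)>0$ for a.e.\ $x$ and all $t\ge0$ and $\mathbb{E}\int_{\mathcal{L}}-\ln(I(t)+E(t))\,dx<\infty$.

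Writing $P:=I+E$ and adding the $E$-- and $I$--equations in (\ref{2}) (the $\sigma E$ terms cancel), the It\^o formula for $-\ln P$, integrated over $\mathcal{L}$ and in expectation, gives
\begin{equation}
\frac{d}{dt}\,\mathbb{E}\int_{\mathcal{L}}-\ln P\,dx=\mathbb{E}\int_{\mathcal{L}}\Bigl[-\frac{A_{2}E+A_{3}I}{P}-\frac{1}{P}\Bigl(\frac{\alpha SI}{S+E+I+R}-\mu_{2}E-\mu_{3}I-\gamma I\Bigr)+\frac{a_{2}^{x}E^{2}+a_{3}^{x}I^{2}}{2P^{2}}\Bigr]dx,\notag
\end{equation}
where $a_{i}^{x}:=\sum_{k}a_{k,i}e_{k}(x)^{2}$ and the stochastic integral has mean zero. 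Now estimate the three groups of terms. For the diffusion term, integration by parts against the Neumann condition yields $-\int_{\mathcal{L}}\tfrac{A_{2}E+A_{3}I}{P}dx=-\int_{\mathcal{L}}\tfrac{k_{2}\nabla E\cdot\nabla P+k_{3}\nabla I\cdot\nabla P}{P^{2}}dx\le0$ (this is $-k\int_{\mathcal{L}}|\nabla P|^{2}/P^{2}$ when $k_{2}=k_{3}=k$; for general diffusion rates one runs the same argument compartmentwise with $-\ln I-\ln E$, each diffusion term being then manifestly non-positive). For the linear losses, $\tfrac{\mu_{2}E+\mu_{3}I+\gamma I}{P}\le\mu_{2}(x)+\mu_{3}(x)+\gamma(x)$ pointwise. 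For the It\^o correction, $E^{2}+I^{2}\le P^{2}$ together with the orthonormality of $\{e_{k}\}$ and $|\mathcal{L}|=1$ give $\int_{\mathcal{L}}\tfrac{a_{2}^{x}E^{2}+a_{3}^{x}I^{2}}{2P^{2}}dx\le\tfrac{\tilde{a}}{2}$. The decisive term is $-\int_{\mathcal{L}}\tfrac{\alpha SI}{P(S+E+I+R)}dx$: here the three pointwise hypotheses $\alpha\ge\gamma$, $\tfrac{S+R}{I+E}\le\tfrac{S+E+I+R}{2}$ and $\tfrac{S}{S+E+I+R}>\tfrac{\gamma}{\alpha}$ (constraints imposed on the solution at every time) are combined to produce $\tfrac{\alpha SI}{P(S+E+I+R)}\ge\tfrac{\alpha}{2}-\mathrm{err}(t,x)$ a.e., with an error whose spatial integral, after Cauchy--Schwarz and $((I+E)\wedge1)^{2}\le(I+E)^{2}\wedge1$, is dominated by $C(\mathbb{E}\int_{\mathcal{L}}((I+E)^{2}\wedge1)dx)^{1/2}$. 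Assembling the four bounds yields the differential inequality above, since $\hat{R}=\int_{\mathcal{L}}\bigl(\tfrac{\alpha}{2}-\mu_{2}-\mu_{3}-\gamma\bigr)dx-\tfrac{\tilde{a}}{2}$.

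To finish, integrate on $[0,t]$:
\begin{equation}
C\int_{0}^{t}\Bigl(\mathbb{E}\int_{\mathcal{L}}\bigl((I+E)^{2}\wedge1\bigr)dx\Bigr)^{1/2}ds\ \ge\ \hat{R}\,t+\mathbb{E}\int_{\mathcal{L}}-\ln(I(t)+E(t))\,dx-\int_{\mathcal{L}}-\ln(I_{0}+E_{0})\,dx.\notag
\end{equation}
Since $-\ln y\ge1-y$, the middle term is $\ge1-\mathbb{E}\int_{\mathcal{L}}(S+E+I+R)(t)\,dx$, and an a priori moment estimate --- integrate the four equations over $\mathcal{L}$, so that the Laplacians drop out by the Neumann condition and the multiplicative noise is a mean-zero martingale, then use the positivity of the death rates --- shows $\sup_{t\ge0}\mathbb{E}\int_{\mathcal{L}}(S+E+I+R)(t)\,dx<\infty$. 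Dividing by $Ct$ and letting $t\to\infty$ gives $\liminf_{t\to\infty}\tfrac1t\int_{0}^{t}\bigl(\mathbb{E}\int_{\mathcal{L}}((I+E)^{2}\wedge1)dx\bigr)^{1/2}ds\ge\hat{R}/C=:R>0$, with $R$ independent of the initial data.

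The step I expect to be the real obstacle is the lower bound on the incidence term: extracting precisely the constant $\alpha/2$ (so that it matches $\hat{R}$) from the three pointwise hypotheses and showing the leftover is controlled by the truncated second moment of $I+E$ is a delicate computation --- indeed those hypotheses already pin $I+E$ above a positive threshold, and one must turn this into a quantitative bound with the correct constant. A secondary technical point is the rigorous passage to the limit in the regularized It\^o formula, i.e.\ checking that the logarithmic singularity is not felt as $\lambda\to\infty$, $\varepsilon\to0$.
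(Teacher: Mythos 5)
Your overall strategy coincides with the paper's: approximate the mild solution by strong solutions, apply It\^o's formula to a logarithmic functional of $I+E$, absorb the loss terms and the It\^o correction into $\hat R$, and conclude by time-averaging. However, the step you yourself flag as ``the real obstacle'' is precisely where the paper invests most of its effort, and your proposal does not supply the mechanism. To dominate the incidence-related error by $C\bigl(\mathbb{E}\int_{\mathcal L}((I+E)^2\wedge1)\,dx\bigr)^{1/2}$ with a constant $C$ independent of $t$, $n$ and the initial data, the paper applies Cauchy--Schwarz in the form
\[
\mathbb{E}\int_{\mathcal L}\frac{\alpha(I_n+E_n)}{S_n+R_n+I_n+E_n}\,dx\le|\alpha|_H\Bigl(\mathbb{E}\int_{\mathcal L}\frac{(I_n+E_n)^2}{(1+I_n+E_n)^2}\,dx\Bigr)^{\frac{1}{2}}\Bigl(\mathbb{E}\int_{\mathcal L}\bigl(\frac{1}{S_n+R_n}+1\bigr)^2dx\Bigr)^{\frac{1}{2}},
\]
and the entire point is that the second factor is uniformly bounded for $t\ge4$. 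That requires the negative-moment estimate $\mathbb{E}\int_{\mathcal L}(S_n+R_n)^{-2}dx\le e^{-t}k_0+\tilde M$, which the paper establishes in Lemmas 3.3 and 3.4 by applying It\^o's formula to $e^t\int_{\mathcal L}(S_n+R_n+\epsilon)^{-p}dx$ (this is where $\Lambda_*>0$ enters quantitatively, through the drift term $-p\Lambda(x)/(S_n+R_n+\epsilon)^{p+1}$) and then bootstrapping the exponent from $1/2$ up to $2$ over the intervals $[0,1],[1,2],[2,3],[3,4]$ via the Markov property, so that the resulting bound for $t\ge4$ no longer depends on the initial condition. Without this input your error term is not controlled and your constant $C$ is not independent of the initial values, so the claimed independence of $R$ is not obtained. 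Your remark that $\Lambda_*>0$ ``keeps the susceptible pool from degenerating'' gestures at this but does not replace the estimate.

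A secondary gap comes from your choice of the unbounded functional $-\ln(I+E)$: at the terminal time you must bound $\mathbb{E}\int_{\mathcal L}-\ln(I(t,x)+E(t,x))\,dx$ from below by a quantity that is $o(t)$. Your route via $-\ln y\ge1-y$ needs $\sup_{t\ge0}\mathbb{E}\int_{\mathcal L}(S+E+I+R)\,dx<\infty$, which requires strictly positive death rates ($\mu_*>0$), not a hypothesis of Theorem 3.2; with only the assumption-free bound $\mathbb{E}\int_{\mathcal L}N\,dx\le\int_{\mathcal L}N_0\,dx+|\Lambda|_H\,t$ you would get $\liminf\ge(\hat R-|\Lambda|_H)/C$, which may be negative. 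The paper sidesteps this by working with the bounded functional $\ln\frac{I+E}{1+I+E}\le0$, whose terminal term can simply be discarded with the correct sign; if you keep $-\ln(I+E)$ you must either add the hypothesis $\mu_*>0$ or switch to the truncated logarithm.
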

 \noindent \textit{\textbf{Proof of Theorem 3.2}}:\\
 \noindent \textbf{Step1}. \textbf{Using the strong solution to approximate the mild solution}.\\ 	
 \indent For each fixed $ n \in \mathbb{N}$, let $ (S_{n}(t,x), E_{n}(t,x), I_{n}(t,x), R_{n}(t,x))$ be the strong solution \cite{E.B. Davies,E.M. Ouhabaz} of the following equations
 \begin{equation}
 \begin{cases}
 dS_{n}(t,x)=[A_{1}S_{n}(t,x) +\Lambda(x)-\mu _{1}(x)S_{n}(t,x)-\alpha(x) \frac{S_{n}(t,x)I_{n}(t,x)}{S_{n}(t,x)+I_{n}(t,x)+E_{n}(t,x)+R_{n}(t,x)}\\ \quad \quad \quad \quad+\beta(x)R_{n}(t,x)]dt+\sum_{k=1}^{n}\sqrt{a_{k,1}}e_{k}(x)S_{n}(t,x)dB_{k,1}(t)  ,\\
 dE_{n}(t,x)=[A_{2}E_{n}(t,x) -\mu _{2}(x)E_{n}(t,x)+ \alpha(x) \frac{S_{n}(t,x)I_{n}(t,x)}{S_{n}(t,x)+I_{n}(t,x)+E_{n}(t,x)+R_{n}(t,x)}\\ \quad \quad \quad \quad-\sigma(x)E_{n}(t,x)]dt+\sum_{k=1}^{n}\sqrt{a_{k,2}}e_{k}(x)E_{n}(t,x)dB_{k,2}(t),\\		
 dI_{n}(t,x)=[A_{3}I_{n}(t,x) -\mu _{3}(x)I_{n}(t,x)+\sigma(x)E_{n}(t,x)-\gamma(x)I_{n}(t,x)]dt\\ \quad \quad \quad \quad+\sum_{k=1}^{n}\sqrt{a_{k,3}}e_{k}(x)I_{n}(t,x)dB_{k,3}(t),\\		
 dR_{n}(t,x)=[A_{4}R_{n}(t,x) -\mu _{4}(x)R_{n}(t,x)+\gamma(x)I_{n}(t,x)-\beta(x)R_{n}(t,x)]dt\\ \quad \quad \quad \quad+\sum_{k=1}^{n}\sqrt{a_{k,4}}e_{k}(x)R_{n}(t,x)dB_{k,4}(t),\\	   
 S_{n}(0,x)=S_{0}(x) , E_{n}(0,x)=E_{0}(x),  I_{n}(0,x)=I_{0}(x),  R_{n}(0,x)=R_{0}(x)  .\label{s}
 \end{cases} 
 \end{equation}	
 \indent By the conclusions about stochastic partial equations in \cite{G. Da Prato and J. Zabczyk,N. N. Nguyen and G. Yin}, the strong solution of (\ref{s}) is uniqueness, and for all fixed $t$, 
 \begin{equation}
 \displaystyle \lim_{n \to \infty }\mathbb{E}\left|S(t)-S_{n}(t) \right|^{2}_{H}=0, \displaystyle \lim_{n \to \infty }\mathbb{E}\left|E(t)-E_{n}(t) \right|^{2}_{H}=0,\notag
 \end{equation} 
 \begin{equation}
 \displaystyle \lim_{n \to \infty }\mathbb{E}\left|I(t)-I_{n}(t) \right|^{2}_{H}=0, \displaystyle \lim_{n \to \infty }\mathbb{E}\left|R(t)-R_{n}(t) \right|^{2}_{H}=0. \notag
 \end{equation}
 \noindent \textbf{Step2}. \textbf{Prove the following lemmas}.	
 \begin{lemma}
 	Let $ \mu_{*}=\displaystyle \inf_{ x\in \mathcal{\bar{L}}} min {\mu_{1}(x), \mu_{2}(x), \mu_{3}(x), \mu_{4}(x)}$, if $\mu_{*}>0 $, then 
 	\begin{equation}
 	\mathbb{E}\int _{\mathcal{L}}(S(t,x)+I(t,x)+E(t,x)+R(t,x))dx \leqslant \int _{\mathcal{L}}(S_{0}(x)+I_{0}(x)+E_{0}(x)+R_{0}(x))dx+\frac{\left|\Lambda  \right|_{H}}{\mu _{*}}e^{\mu _{*}t}. \notag
 	\end{equation}
 \end{lemma}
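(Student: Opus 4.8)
The plan is to obtain a differential inequality for the total population mass
\begin{equation}
N(t):=\int_{\mathcal{L}}\bigl(S(t,x)+E(t,x)+I(t,x)+R(t,x)\bigr)\,dx \notag
\end{equation}
and close it with Gronwall's inequality. Since the mild solution is not regular enough to be inserted into It\^o's formula directly, I would first carry out the computation for the strong solutions $(S_n,E_n,I_n,R_n)$ of the truncated system (\ref{s}) from Step 1, prove the estimate for each $n$, and then pass to the limit $n\to\infty$ using the convergence $\mathbb{E}|S(t)-S_n(t)|_H^2\to0$ (and its analogues for $E,I,R$) already recorded in Step 1, which in particular yields convergence in $L^1(\mathcal{L})$ in expectation on the bounded domain $\mathcal{L}$.

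First I would add the four equations of (\ref{s}) and integrate over $\mathcal{L}$. Two algebraic facts do the work: (i) each Laplacian contribution $\int_{\mathcal{L}}A_iu_i\,dx$ vanishes because the Neumann realization of $k_i\triangle$ conserves mass (divergence theorem together with the homogeneous Neumann boundary condition); (ii) all transfer and interaction terms cancel pairwise — the infection term $\alpha(x)\frac{S_nI_n}{S_n+E_n+I_n+R_n}$ enters the $S$- and $E$-equations with opposite signs, $\sigma(x)E_n$ the $E$- and $I$-equations, $\gamma(x)I_n$ the $I$- and $R$-equations, and $\beta(x)R_n$ the $R$- and $S$-equations. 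What remains is
\begin{equation}
dN_n(t)=\Bigl(\int_{\mathcal{L}}\Lambda(x)\,dx-\int_{\mathcal{L}}\bigl(\mu_1S_n+\mu_2E_n+\mu_3I_n+\mu_4R_n\bigr)\,dx\Bigr)dt+dM_n(t), \notag
\end{equation}
where $M_n$ is the finite sum of stochastic integrals $\int_0^{\cdot}\int_{\mathcal{L}}\sum_{k=1}^{n}\sqrt{a_{k,i}}\,e_k(x)(\cdot)\,dx\,dB_{k,i}$.

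Taking expectations removes $M_n$: using the positivity of $S_n,E_n,I_n,R_n$ (established as in Step 2 of Theorem 3.1), the $L^2$-integrability of the strong solution, and $\sum_ka_{k,i}<\infty$, $C_0<\infty$, the process $M_n$ is a genuine martingale. Then, using positivity once more together with $\mu_i(x)\ge\mu_*$ and $\int_{\mathcal{L}}\Lambda\,dx\le|\Lambda|_H|\mathcal{L}|=|\Lambda|_H$ (recall $|\mathcal{L}|=1$),
\begin{equation}
\frac{d}{dt}\,\mathbb{E}N_n(t)\le|\Lambda|_H-\mu_*\,\mathbb{E}N_n(t), \notag
\end{equation}
and Gronwall's inequality in integrating-factor form gives $\mathbb{E}N_n(t)\le N_n(0)+\frac{|\Lambda|_H}{\mu_*}e^{\mu_*t}$ with $N_n(0)=\int_{\mathcal{L}}(S_0+E_0+I_0+R_0)\,dx$ (in fact one even obtains the sharper uniform-in-$t$ bound $N_n(0)+|\Lambda|_H/\mu_*$). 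Letting $n\to\infty$ and invoking the $L^2(\Omega;H)$-convergence from Step 1 transfers the bound to $(S,E,I,R)$, which is the assertion.

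The step I expect to be most delicate is the rigorous justification of the two ``vanishing'' operations at a level where the manipulation is actually legitimate: that $\int_{\mathcal{L}}A_iu_i\,dx=0$ for $u_i$ in the domain of the Neumann Laplacian, and that the noise term truly has zero mean — the latter needs the integrability $\mathbb{E}\int_0^t|S_n(s)|_{L^2}^2\,ds<\infty$ (and its analogues) so that $M_n$ is a true martingale rather than merely a local one. Running the argument through the truncated strong solutions and only afterward passing to the limit is precisely what makes these steps rigorous.
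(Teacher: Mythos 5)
Your argument is correct and is essentially the intended one: the paper omits its own proof of this lemma (deferring to a reference), but the machinery it sets up nearby --- the truncated strong solutions of (\ref{s}) and the mass functional $u\mapsto\int_{\mathcal{L}}u\,dx$ used explicitly in the proof of Theorem 3.4, under which the Neumann semigroup conserves mass and the transfer terms cancel pairwise --- is exactly the mass-balance/Gronwall computation you carry out. Note that your derivation actually yields the sharper uniform-in-time bound $N(0)+\left|\Lambda\right|_{H}/\mu_{*}$, which implies the stated inequality since $e^{\mu_{*}t}\geqslant 1$.
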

 The proof of Lemma 3.2 is similar to \cite{G. Yin2020}, we omit it.
 \begin{lemma}
 	If $\int_{\mathcal{L}}\frac{1}{(S_{0}(x)+R_{0}(x))^{p}}dx<\infty, \quad  \forall p>0  $, there exists $ \tilde{M}_{p}>0$, such that
 	\begin{equation}
 	\mathbb{E} \int_{\mathcal{L}}\frac{1}{(S_{n}(t,x)+R_{n}(t,x))^{p}}dx \leqslant e^{-t}\int_{\mathcal{L}}\frac{1}{(S_{0}(x)+R_{0}(x))^{p}}dx+ \tilde{M}_{p}, \notag
 	\end{equation}
 	in which $ \tilde{M}_{p}$ is independent of $n$ and initial conditions. 
 \end{lemma}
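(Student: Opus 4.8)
The plan is to run a negative–moment (It\^o) estimate for the functional $\Psi\mapsto\int_{\mathcal{L}}\Psi(x)^{-p}\,dx$ along $\Psi_n:=S_n+R_n$, extracting from the recruitment term enough dissipativity --- this is where the standing hypothesis $\Lambda_*>0$ of Theorem 3.2 enters --- to close a linear differential inequality with a constant depending on none of $n$, $t$ or $V_0$. First I would add the $S_n$- and $R_n$-lines of (\ref{s}); the two $\beta(x)R_n$ terms cancel, leaving
\begin{equation}
d\Psi_n=\bigl[k_1\triangle S_n+k_4\triangle R_n+\Lambda-\mu_1 S_n-\mu_4 R_n-\tfrac{\alpha S_nI_n}{S_n+E_n+I_n+R_n}+\gamma I_n\bigr]dt+\textstyle\sum_{k\le n}\sqrt{a_{k,1}}\,e_k S_n\,dB_{k,1}+\sum_{k\le n}\sqrt{a_{k,4}}\,e_k R_n\,dB_{k,4}. \notag
\end{equation}
Since $W_1$ and $W_4$ are independent, the quadratic--variation density of $\Psi_n$ is $\sum_{k\le n}a_{k,1}e_k^2S_n^2+\sum_{k\le n}a_{k,4}e_k^2R_n^2$, which by the definitions of $C_0$ and $\tilde a$ and the positivity $S_n,R_n\ge0$ (inherited from the Step~2 argument of Theorem 3.1) is $\le C_0^2\tilde a\,\Psi_n^2$, a bound uniform in $n$.

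Next I would apply It\^o's formula (in the form valid for strong solutions, cf.\ \cite{RFCurtain}) to $F(y)=y^{-p}$, integrate over $\mathcal{L}$ (recall $|\mathcal{L}|=1$), and --- after the routine regularisation $y\mapsto(y+\delta)^{-p}$, $\delta\downarrow0$, and, if needed, localisation --- arrive at
\begin{equation}
d\!\int_{\mathcal{L}}\!\Psi_n^{-p}dx=\int_{\mathcal{L}}\!\Bigl[-p\,\Psi_n^{-p-1}\bigl(k_1\triangle S_n+k_4\triangle R_n+\Lambda-\mu_1S_n-\mu_4R_n-\tfrac{\alpha S_nI_n}{S_n+E_n+I_n+R_n}+\gamma I_n\bigr)+\tfrac{p(p+1)}{2}\Psi_n^{-p-2}\bigl(\textstyle\sum_{k\le n}a_{k,1}e_k^2S_n^2+\sum_{k\le n}a_{k,4}e_k^2R_n^2\bigr)\Bigr]dx\,dt+dM_t, \notag
\end{equation}
with $M$ a zero-mean martingale. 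I would then estimate the drift integrand termwise: integration by parts with the Neumann conditions $\partial_\nu S_n=\partial_\nu R_n=0$ turns the diffusion part into $-p(p+1)\int_{\mathcal{L}}\Psi_n^{-p-2}\bigl(k_1|\nabla S_n|^2+(k_1+k_4)\nabla S_n\!\cdot\!\nabla R_n+k_4|\nabla R_n|^2\bigr)dx\le0$; using $\tfrac{S_nI_n}{S_n+E_n+I_n+R_n}\le S_n\le\Psi_n$ and $\mu_1S_n+\mu_4R_n\le\mu^*\Psi_n$ (with $\mu^*=|\mu_1|_{H}\vee|\mu_4|_{H}$), the contributions of $-\mu_1S_n-\mu_4R_n$ and of $-\tfrac{\alpha S_nI_n}{S_n+E_n+I_n+R_n}$ are $\le C_1\Psi_n^{-p}$; the term $-p\Psi_n^{-p-1}\gamma I_n\le0$ is discarded; the recruitment term gives $-p\Psi_n^{-p-1}\Lambda\le-p\Lambda_*\Psi_n^{-p-1}$; and the second--order noise term is $\le\tfrac{p(p+1)}{2}C_0^2\tilde a\,\Psi_n^{-p}$. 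Here $C_1$ depends only on $p$ and the fixed coefficient bounds, not on $n$.

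It remains to combine these. Writing $y=\Psi_n^{-1}$ and using Young's inequality $y^p\le\delta y^{p+1}+\delta^{-p}$ twice --- once with $\delta$ small to absorb all $\Psi_n^{-p}$ terms into $-p\Lambda_*\Psi_n^{-p-1}$, and once more to dominate the residual $-\tfrac{p\Lambda_*}{2}\Psi_n^{-p-1}$ by $-\Psi_n^{-p}$ up to an additive constant --- the drift integrand is pointwise $\le-\Psi_n^{-p}+C_2$ with $C_2=C_2(p)$ independent of $n$ and of $V_0$. Integrating over $\mathcal{L}$, taking expectations (the martingale drops out), and passing $\delta\downarrow0$ by monotone convergence, I get $\tfrac{d}{dt}\mathbb{E}\!\int_{\mathcal{L}}\Psi_n^{-p}dx\le-\mathbb{E}\!\int_{\mathcal{L}}\Psi_n^{-p}dx+C_2$; Gronwall then yields $\mathbb{E}\!\int_{\mathcal{L}}\Psi_n^{-p}dx\le e^{-t}\int_{\mathcal{L}}(S_0+R_0)^{-p}dx+C_2$, i.e.\ the claim with $\tilde M_p=C_2$. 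I expect the main obstacle to be the diffusion term: after the cancellation one is left with $k_1\triangle S_n+k_4\triangle R_n$ rather than a single Laplacian, and the resulting density $k_1|\nabla S_n|^2+(k_1+k_4)\nabla S_n\cdot\nabla R_n+k_4|\nabla R_n|^2$ is manifestly $\ge0$ only when the diffusion rates of $S$ and $R$ coincide; justifying (or arranging) its sign in general is the one delicate point, after which the remaining steps are routine bookkeeping together with the uniform-in-$n$ control of the noise intensity.
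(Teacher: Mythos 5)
Your proposal follows essentially the same route as the paper's proof: It\^o's formula applied to the regularized negative moment $\int_{\mathcal{L}}(S_n+R_n+\epsilon)^{-p}\,dx$, dissipation extracted from the recruitment term via $\Lambda\geqslant\Lambda_*>0$ (whence the $\Lambda_*^{-p}$ in the paper's constant), a uniform-in-$n$ bound of order $C_0^2\tilde a\,\Psi_n^2$ on the noise intensity, and an exponential-weight/Gronwall closure --- the paper simply carries the factor $e^t$ inside the It\^o computation instead of invoking Gronwall at the end, which is the same calculation. As for the one obstacle you flag: the quadratic form $k_1|\nabla S_n|^2+(k_1+k_4)\nabla S_n\cdot\nabla R_n+k_4|\nabla R_n|^2=(\nabla S_n+\nabla R_n)\cdot(k_1\nabla S_n+k_4\nabla R_n)$ is indeed indefinite whenever $k_1\neq k_4$, and the paper's own proof does not resolve this either --- the Laplacian contributions are silently discarded in the ``after deflating'' step --- so your caveat identifies a gap shared with the paper's argument rather than a defect specific to your version.
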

 \begin{proof}
 	\rm For $e^{t}\int_{\mathcal{L}}\frac{1}{(S_{n}(t,x)+R_{n}(t,x)+\epsilon)^{p}}dx $, using Ito's formula in \cite{R.F. Curtain}, we have
 	\begin{equation}
 	\begin{split}
 	&d\Bigl(e^{t}\int_{\mathcal{L}}\frac{1}{(S_{n}(t,x)+R_{n}(t,x)+\epsilon)^{p}}dx \Bigr)=
 	e^{t}\int_{\mathcal{L}}\frac{1}{(S_{n}(t,x)+R_{n}(t,x)+\epsilon)^{p}}dx \\
 	+ &e^{t}\int_{\mathcal{L}}\frac{-p}{(S_{n}(t,x)+R_{n}(t,x)+\epsilon)^{p+1}}\Bigl[k_{1}\triangle S_{n}(t,x) +\Lambda(x)-\mu _{1}(x)S_{n}(t,x)\\-&\alpha(x) \frac{S_{n}(t,x)I_{n}(t,x)}{S_{n}(t,x)+I_{n}(t,x)+E_{n}(t,x)+R_{n}(t,x)} +\beta(x)R_{n}(t,x)+k_{4}\triangle R_{n}(t,x) \\-&\mu _{4}(x)R_{n}(t,x)+\gamma(x)I_{n}(t,x)-\beta(x)R_{n}(t,x)\Bigr]dxdt\\
 	+&e^{t}\int_{\mathcal{L}}\frac{-p}{(S_{n}(t,x)+R_{n}(t,x)+\epsilon)^{p+1}}\sum_{k=1}^{n}\sqrt{a_{k,1}}e_{k}(x)S_{n}(t,x)dxdB_{k,1}(t)\\
 	+&e^{t}\int_{\mathcal{L}}\frac{-p}{(S_{n}(t,x)+R_{n}(t,x)+\epsilon)^{p+1}}\sum_{k=1}^{n}\sqrt{a_{k,4}}e_{k}(x)R_{n}(t,x)dxdB_{k,4}(t)\\
 	+&\frac{e^{t}}{2}\int_{\mathcal{L}}\frac{p(p+1)\sum_{k=1}^{n}a_{k,1}}{(S_{n}(t,x)+R_{n}(t,x)+\epsilon)^{p+2}}e^{2}_{k}(x)S^{2}_{n}(t,x)dxdt\\
 	+&\frac{e^{t}}{2}\int_{\mathcal{L}}\frac{p(p+1)\sum_{k=1}^{n}a_{k,4}}{(S_{n}(t,x)+R_{n}(t,x)+\epsilon)^{p+2}}e^{2}_{k}(x)R^{2}_{n}(t,x)dxdt.  \notag
 	\end{split}
 	\end{equation}
 	Let 
 	\begin{equation}
 	m= \rm max\left\{\left|\mu_{1} \right|_{H}, \left|\mu_{4} \right|_{H}, \left|\alpha-\gamma \right|_{H} \right\}, \quad M_{p}=\frac{1}{p}+2m+\frac{p+1}{2}aC^{2}_{0}. \notag
 	\end{equation}
 	Due to the assumptions, after deflating we can get
 	\begin{equation}
 	\begin{split}
 	&e^{t}\int_{\mathcal{L}}\frac{1}{(S_{n}(t,x)+R_{n}(t,x)+\epsilon)^{p}}dx \\\leqslant
 	&\int_{\mathcal{L}}\frac{1}{(S_{0}(x)+R_{0}(x)+\epsilon)^{p}}dx \\
 	+&\sum_{k=1}^{n}\int_{0}^{t}e^{s}\left [ \sqrt{a_{k,1}} \int_{\mathcal{L}}\frac{-pe_{k}S_{n}(t,x)}{(S_{n}(t,x)+R_{n}(t,x)+\epsilon)^{p+1}}dx\right ]dB_{k,1}(s)\\
 	+&\sum_{k=1}^{n}\int_{0}^{t}e^{s}\left [ \sqrt{a_{k,4}} \int_{\mathcal{L}}\frac{-pe_{k}R_{n}(t,x)}{(S_{n}(t,x)+R_{n}(t,x)+\epsilon)^{p+1}}dx\right ]dB_{k,4}(s)\\
 	+&\int_{0}^{t}\frac{p\cdot M^{p+1}_{p}\cdot 2^{p}}{\Lambda ^{p}_{*}}e^{s}ds. \label{ee}
 	\end{split}
 	\end{equation}
 	Therefore, (\ref{ee}) implies that 
 	\begin{equation}
 	\begin{split}
 	&\mathbb{E}\int_{\mathcal{L}}\frac{1}{(S_{n}(t,x)+R_{n}(t,x)+\epsilon)^{p}}dx\\\leqslant
 	&e^{-t}\int_{\mathcal{L}}\frac{1}{(S_{0}(x)+R_{0}(x)+\epsilon)^{p}}dx\\
 	+&e^{-t}\int_{0}^{t}\frac{p\cdot M^{p+1}_{p}\cdot 2^{p}}{\Lambda ^{p}_{*}}e^{s}ds.
 	\end{split}
 	\end{equation}
 	Define $ \tilde{M}_{p}=e^{-t}\int_{0}^{t}\frac{p\cdot M^{p+1}_{p}\cdot 2^{p}}{\Lambda ^{p}_{*}}e^{s}ds$, and letting $ \epsilon \rightarrow 0$, we complete the proof.
 	$  \hfill\square $
 \end{proof}
 
 \begin{lemma}
 	\begin{equation}
 	\mathbb{E} \int_{\mathcal{L}}\frac{1}{(S_{n}(4,x)+R_{n}(4,x))^{2}}dx \leqslant k_{0}, \quad n \in \mathbb{N}, \notag
 	\end{equation}
 	in which $k_{0}$ is independent of $ n$, it depends only initial condition.
 \end{lemma}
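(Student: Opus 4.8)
The plan is to recognize Lemma~3.4 as the special case $p=2$, $t=4$ of Lemma~3.3, and then to deal with the one hypothesis of Lemma~3.3 that is not part of the standing assumptions of Theorem~3.2 — finiteness of the negative second moment of the initial datum. \emph{Direct route:} if $\int_{\mathcal{L}}(S_{0}(x)+R_{0}(x))^{-2}\,dx<\infty$, then Lemma~3.3 with $p=2$, evaluated at $t=4$, gives immediately
\[
\mathbb{E}\int_{\mathcal{L}}\frac{1}{(S_{n}(4,x)+R_{n}(4,x))^{2}}\,dx\;\leqslant\; e^{-4}\int_{\mathcal{L}}\frac{1}{(S_{0}(x)+R_{0}(x))^{2}}\,dx+\tilde{M}_{2}\;=:\;k_{0},
\]
and since $\tilde{M}_{2}$ is independent of $n$ and of the initial condition, so is $k_{0}$, which is exactly the claim. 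In this route the value $t=4$ is immaterial; any fixed $t>0$ serves.

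To allow an arbitrary $0\leqslant S_{0},R_{0}\in C(\mathcal{\bar{L}};\mathbb{R})$, possibly vanishing somewhere so that $\int_{\mathcal{L}}(S_{0}+R_{0})^{-2}dx=\infty$, I would insert a short regularization step: prove that for some fixed $t_{1}\in(0,4)$ — any positive time works, e.g. $t_{1}=2$ —
\[
\mathbb{E}\int_{\mathcal{L}}\frac{1}{(S_{n}(t_{1},x)+R_{n}(t_{1},x))^{2}}\,dx\;\leqslant\;C_{1},
\]
with $C_{1}$ independent of $n$ and of the initial condition. Granting this, one restarts the strong equation (\ref{s}) at time $t_{1}$ with data $(S_{n}(t_{1}),R_{n}(t_{1}))$, which now has finite negative second moment, repeats the Itô computation underlying Lemma~3.3 on the interval $[t_{1},4]$, and, after conditioning on $\mathcal{F}_{t_{1}}$ and taking expectations, obtains $\mathbb{E}\int_{\mathcal{L}}(S_{n}(4,x)+R_{n}(4,x))^{-2}dx\leqslant e^{-(4-t_{1})}C_{1}+\tilde{M}_{2}=:k_{0}$, again independent of $n$ (and in fact, by this route, independent of the initial condition too, which is even stronger than stated).

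The regularizing estimate is where the work lies and is the only real obstacle. The mechanism is that the recruitment term, bounded below by $\Lambda_{*}>0$ and smeared by the Neumann heat semigroup $e^{tA_{1}}$, pushes $S_{n}$ away from zero instantaneously. Since $\frac{S_{n}I_{n}}{S_{n}+I_{n}+E_{n}+R_{n}}\leqslant S_{n}$ and $\beta R_{n}\geqslant 0$, the drift of $S_{n}$ dominates that of the linear equation $dX_{n}=[A_{1}X_{n}+\Lambda-(|\mu_{1}|_{H}+|\alpha|_{H})X_{n}]\,dt+X_{n}\,dW_{1}^{(n)}$ with $X_{n}(0)=S_{0}$ and the same truncated noise $W_{1}^{(n)}=\sum_{k=1}^{n}\sqrt{a_{k,1}}B_{k,1}e_{k}$, so a comparison argument gives $S_{n}\geqslant X_{n}\geqslant 0$ pathwise. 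Writing $X_{n}$ by variation of constants and using the strictly positive Gaussian lower bound for the Neumann heat kernel of the bounded $C^{2}$ domain $\mathcal{L}$, together with the exponential factor produced by the multiplicative noise (controlled by $\sum_{k}a_{k,1}|e_{k}|_{L^{\infty}}^{2}\leqslant a_{1}C_{0}^{2}$), one reaches a bound of the shape $S_{n}(t_{1},x)\geqslant c_{0}\Lambda_{*}\int_{0}^{t_{1}}Z_{n}(s)\,ds$ uniformly in $x$, with $Z_{n}$ a positive geometric-Brownian-type factor; since $|\mathcal{L}|=1$ the negative-moment integral is then at most $(c_{0}\Lambda_{*}\int_{0}^{t_{1}}Z_{n}(s)\,ds)^{-2}$, whose expectation is finite and $n$-independent by the standard fact that time integrals of such factors have finite negative moments. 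The delicate parts are making the comparison rigorous for equations driven by space–time white noise and keeping every constant uniform in the truncation level $n$; once that is done the rest is bookkeeping, and, to repeat, the number $4$ plays no essential role and is presumably fixed only for convenience downstream.
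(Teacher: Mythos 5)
Your reduction is sound as far as it goes: Lemma~3.3 with $p=2$ does finish the job once one knows that $\mathbb{E}\int_{\mathcal{L}}(S_{n}(t_{1},x)+R_{n}(t_{1},x))^{-2}\,dx$ is finite and bounded uniformly in $n$ at some positive time $t_{1}$, and the paper's own proof also ends exactly this way (``applying the Lemma 3.3''). The gap is in your regularizing estimate, which you correctly identify as where the work lies but then support only with steps that do not hold as stated. The representation you invoke --- variation of constants for $dX_{n}=[A_{1}X_{n}+\Lambda-cX_{n}]\,dt+X_{n}\,dW_{1}^{(n)}$ producing a ``positive geometric-Brownian-type factor'' $Z_{n}$ and a pointwise lower bound $S_{n}(t_{1},x)\geqslant c_{0}\Lambda_{*}\int_{0}^{t_{1}}Z_{n}(s)\,ds$ uniform in $x$ --- is not available here: because the noise $W_{1}^{(n)}=\sum_{k\leqslant n}\sqrt{a_{k,1}}B_{k,1}e_{k}$ has spatially varying coefficients, the multiplication operator $u\mapsto e_{k}u$ does not commute with $A_{1}$, so there is no Feynman--Kac/exponential product formula for the solution, and a pointwise-in-$x$ strictly positive lower bound for such an SPDE, with negative moments controlled uniformly in $n$, is itself a substantive positivity/small-ball problem rather than bookkeeping. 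The pathwise comparison $S_{n}\geqslant X_{n}$ (which already needs a comparison theorem for SPDEs with random, component-coupled drift --- citable but not free) only transfers the difficulty to $X_{n}$ without resolving it.

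For contrast, the paper obtains the regularization entirely at the level of integrated moments: It\^{o}'s formula applied to $\int_{\mathcal{L}}(S_{n}+R_{n}+\epsilon)^{\theta}\,dx$ for a decreasing sequence of exponents $\theta=\frac{1}{2},-\frac{1}{2},-\frac{3}{2},\dots$ produces, via the recruitment term $\Lambda(x)\geqslant\Lambda_{*}>0$, a time-integrated bound on the next lower power over each unit interval; a mean-value selection yields a single time $t_{i}(n)$ at which that power has finite moment, and Lemma~3.3 together with the Markov property propagates the bound across the following unit interval. Four such steps are precisely why the lemma is stated at $t=4$, so the number $4$ is less immaterial to the paper's argument than you suggest (though any scheme reaching exponent $-2$ by a fixed time would serve). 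To repair your proof you would need either to establish the pointwise lower bound rigorously --- a considerably harder task than the lemma itself --- or to replace that step by the paper's moment bootstrap, at which point the two arguments coincide.
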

 \begin{proof}
 	\rm Similar to the proof of Lemma 3.3, we have the following facts
 	\begin{equation}
 	\mathbb{E}\int_{\mathcal{L}}(S_{n}(t,x)+R_{n}(t,x))dx \leqslant \int_{\mathcal{L}}(S_{0}(x)+R_{0}(x))dx+\left|\Lambda  \right|_{H}\cdot t. \notag
 	\end{equation}
 	Then, there exists $ k_{1}>0$ ($k_{1}$ is independent of $ n$) s.t 
 	\begin{equation}
 	\mathbb{E}\int_{\mathcal{L}}(S_{n}(t,x)+R_{n}(t,x))^{q}dx \leqslant k_{1}, \quad t \in [0,1], q \in [0,1] . \label{e1}	 
 	\end{equation}
 	\indent Let $ N_{1}= \frac{1}{2}(m+\left| \alpha \right|_{H}+\frac{1}{4}aC_{0}^{2})$, we obtain
 	\begin{equation}
 	\begin{split}
 	&\mathbb{E}\int_{\mathcal{L}}(S_{n}(t,x)+R_{n}(t,x)+\epsilon)^{\frac{1}{2}}dx \\ \geqslant
 	& \frac{1}{2}\int_{0}^{1}\mathbb{E}\int_{\mathcal{L}}\frac{\Lambda(x) }{(S_{n}(1,x)+R_{n}(1,x)+\epsilon)^{\frac{1}{2}}}dxds\\
 	-&N_{1}\int_{0}^{1}\mathbb{E}\int_{\mathcal{L}}(S_{n}(t,x)+R_{n}(t,x))^{\frac{1}{2}}dxds.
 	\end{split} \label{e2}
 	\end{equation}
 	Due to (\ref{e1}) and (\ref{e2}), 
 	\begin{equation}
 	\int_{0}^{1}\mathbb{E}\int_{\mathcal{L}}\frac{\Lambda(x) }{(S_{n}(s,x)+R_{n}(s,x))^{\frac{1}{2}}}dxds \leqslant 2(1+N_{1})k_{1},  \notag
 	\end{equation}
 	equivalently, there exists $ t_{1}=t_{1}(n) \in [0,1]$ s.t.
 	\begin{equation}
 	\mathbb{E}\int_{\mathcal{L}}\frac{\Lambda(x) }{(S_{n}(t_{1},x)+R_{n}(t_{1},x))^{\frac{1}{2}}}dx \leqslant 2(1+N_{1})k_{1} . \notag
 	\end{equation}
 	\indent Applying Lemma 3.3 and the Markov property of $(S_{n}(t,x), E_{n}(t,x), I_{n}(t,x), R_{n}(t,x)) $, we get
 	\begin{equation}
 	\mathbb{E}\int_{\mathcal{L}}\frac{\Lambda(x) }{(S_{n}(t,x)+R_{n}(t,x)+\epsilon)^{\frac{1}{2}}}dx \leqslant k_{2}, \quad \forall t \in [1,2], \notag
 	\end{equation}
 	in which $ k_{2}$ is independent of $ n$.
 	Using Ito s formula in \cite{R.F. Curtain}, we have
 	\begin{equation}
 	\begin{split}
 	&\mathbb{E}\int_{\mathcal{L}}(S_{n}(2,x)+R_{n}(2,x)+\epsilon)^{-\frac{1}{2}}dx \\ \leqslant
 	&\mathbb{E}\int_{\mathcal{L}}(S_{n}(1,x)+R_{n}(1,x)+\epsilon)^{-\frac{1}{2}}dx\\
 	-&\frac{1}{2}\int_{1}^{2}\mathbb{E}\int_{\mathcal{L}}\frac{\Lambda(x) }{(S_{n}(s,x)+R_{n}(s,x)+\epsilon)^{\frac{3}{2}}}dxds\\
 	+&\frac{1}{2}(m+\left| \alpha \right|_{H}+\frac{3}{4}aC_{0}^{2})\int_{1}^{2}\mathbb{E}\int_{\mathcal{L}}(S_{n}(s,x)+R_{n}(s,x))^{-\frac{1}{2}}dxds.
 	\end{split} \notag
 	\end{equation}
 	\indent Repeat the above method, we have the following facts.\\
 	(1) There exists $ t_{2}=t_{2}(n) \in [1,2]$ s.t.
 	\begin{equation}
 	\mathbb{E}\int_{\mathcal{L}}\frac{\Lambda(x) }{(S_{n}(t_{2},x)+R_{n}(t_{3},x))^{\frac{3}{2}}}dx \leqslant k_{3}, \notag
 	\end{equation}
 	then, 
 	\begin{equation}
 	\mathbb{E}\int_{\mathcal{L}}\frac{\Lambda(x) }{(S_{n}(t,x)+R_{n}(t,x)+\epsilon)^{\frac{3}{2}}}dx \leqslant k_{4}, \quad \forall t \in [2,3]. \notag
 	\end{equation}
 	(2) There exists $ t_{3}=t_{3}(n) \in [3,4]$ s.t.
 	\begin{equation}
 	\mathbb{E}\int_{\mathcal{L}}\frac{\Lambda(x) }{(S_{n}(t_{2},x)+R_{n}(t_{3},x))^{\frac{5}{2}}}dx \leqslant k_{5} . \notag
 	\end{equation}
 	Thus, there exists $ t_{4}=t_{4}(n) \in [0,4]$ s.t.
 	\begin{equation}
 	\mathbb{E}\int_{\mathcal{L}}\frac{\Lambda(x)}{(S_{n}(t_{4},x)+R_{n}(t_{4},x))^{2}}dx \leqslant k_{6} . \notag
 	\end{equation}
 	Finally, applying the Lemma 3.3, Lemma 3.4 is proved.
 	$  \hfill\square $
 \end{proof}
 \noindent \textbf{Step3}. \textbf{Complete the proof of Theorem 3.2}.
 By the Lemma 3.3 and Lemma 3.4, we get
 \begin{equation}
 \mathbb{E} \int_{\mathcal{L}}\frac{1}{(S_{n}(t,x)+R_{n}(t,x))^{2}}dx \leqslant e^{-t}k_{0}+\tilde{M}, \quad \forall n \in \mathbb{N}, t \geqslant 4, \label{d1}
 \end{equation}
 in which $ k_{0}$ and $ \tilde{M}$ are both independent of $n$, but $ k_{0}$ depend on initial value.\\
 \indent Define $\tilde{m}= \rm max\left\{\left|\mu_{2} \right|_{H}, \left|\mu_{3} \right|_{H}, \left|\gamma \right|_{H} \right\}$. After deflating, we can obtain the following estimates.\\
 \begin{equation}
 \mathbb{E}\int_{\mathcal{L}}(I_{n}(t,x)+E_{n}(t,x))dx \geqslant\mathbb{E}\int_{\mathcal{L}} ln(I_{n}(t,x)+E_{n}(t,x))dx \geqslant \int_{\mathcal{L}}(I_{0}(x)+E_{0}(x))dx-(\tilde{m}+\tilde{a})> -\infty,\quad  \forall t >0, \notag
 \end{equation}
 i.e. \\
 \begin{equation}
 P\left\{(I_{n}(t,x)+E_{n}(t,x))>0 \quad a.e. \; \rm in \; \mathcal{L} \right\}=1,  \quad \forall t >0, \forall n \in \mathbb{N}, \label{d2}
 \end{equation}
 \begin{equation}
 \begin{split}
 &\int_{0}^{t}\mathbb{E}\int_{\mathcal{L}}\Bigl(\frac{\alpha(x)(I_{n}(t,x)+E_{n}(t,x))}{(S_{n}(t,x)+R_{n}(t,x))(I_{n}(t,x)+E_{n}(t,x))}dxds \\
 &+\int_{0}^{t}\mathbb{E}\int_{\mathcal{L}}\Bigl(\frac{\alpha(x)(S_{n}(t,x)+R_{n}(t,x))(I_{n}(t,x)+E_{n}(t,x))}{(S_{n}(t,x)+R_{n}(t,x)+I_{n}(t,x)+E_{n}(t,x))(I_{n}(t,x)+E_{n}(t,x)+1)}\Bigr)dxds\\ \geqslant
 &\int_{\mathcal{L}}ln\frac{I_{0}(x)+E_{0}(x)+\epsilon}{1+I_{0}(x)+E_{0}(x)}dx+\hat{R}t-\left|\alpha  \right|_{H}\int_{0}^{t}\int_{\mathcal{L}}\frac{\epsilon}{I_{n}(s,x)+E_{n}(s,x)+\epsilon}dxds, \quad \forall t \ \geqslant 0, \forall n \in \mathbb{N}, 0<\epsilon<1, \label{d3}
 \end{split}
 \end{equation}
 (\ref{d3}) implies 
 \begin{equation}
 \begin{split}
 &\int_{0}^{t}\mathbb{E}\int_{\mathcal{L}}\Bigl(\frac{\alpha(x)(I_{n}(t,x)+E_{n}(t,x))}{(S_{n}(t,x)+R_{n}(t,x))(I_{n}(t,x)+E_{n}(t,x))}dxds \\
 &+\int_{0}^{t}\mathbb{E}\int_{\mathcal{L}}\Bigl(\frac{\alpha(x)(S_{n}(t,x)+R_{n}(t,x))(I_{n}(t,x)+E_{n}(t,x))}{(S_{n}(t,x)+R_{n}(t,x)+I_{n}(t,x)+E_{n}(t,x))(I_{n}(t,x)+E_{n}(t,x)+1)}\Bigr)dxds\\ \geqslant
 &\int_{\mathcal{L}}ln\frac{I_{0}(x)+E_{0}(x)}{1+I_{0}(x)+E_{0}(x)}dx+\hat{R}t, \quad \forall t \ \geqslant 0, \forall n \in \mathbb{N},  \label{d4}
 \end{split}
 \end{equation}
 due to the facts
 \begin{equation}
 \frac{S_{n}(t,x)+R_{n}(t,x)}{S_{n}(t,x)+R_{n}(t,x)+I_{n}(t,x)+E_{n}(t,x)} \leqslant 1, \notag
 \end{equation}
 \begin{equation}
 \frac{1+E_{n}(t,x)+I_{n}(t,x)}{S_{n}(t,x)+R_{n}(t,x)+I_{n}(t,x)+E_{n}(t,x)} \leqslant \frac{1}{S_{n}(t,x)+R_{n}(t,x)}+1, \notag
 \end{equation}
 we have
 \begin{equation}
 \begin{split}
 &\left|\alpha  \right|_{H} \Bigl(\mathbb{E}\int_{\mathcal{L}}\frac{(I_{n}(t,x)+E_{n}(t,x))^{2}}{(1+I_{n}(t,x)+E_{n}(t,x))^{2}}dx\Bigr)^{\frac{1}{2}} \\\geqslant& \mathbb{E}\int_{\mathcal{L}}\frac{\alpha(x)(I_{n}(t,x)+E_{n}(t,x))}{I_{n}(t,x)+E_{n}(t,x)+1}dx \\\geqslant& \mathbb{E}\int_{\mathcal{L}}\frac{\alpha(x)(S_{n}(t,x)+R_{n}(t,x))(I_{n}(t,x)+E_{n}(t,x))}{(S_{n}(t,x)+R_{n}(t,x)+I_{n}(t,x)+E_{n}(t,x))(I_{n}(t,x)+E_{n}(t,x)+1)}dx ,\label{d5}
 \end{split}
 \end{equation}
 \begin{equation}
 \begin{split}
 &\left|\alpha  \right|_{H} \Bigl(\mathbb{E}\int_{\mathcal{L}}\frac{(I_{n}(t,x)+E_{n}(t,x))^{2}}{(1+I_{n}(t,x)+E_{n}(t,x))^{2}}dx\Bigr)^{\frac{1}{2}} \cdot \Bigl(\mathbb{E}\int_{\mathcal{L}}\bigl(\frac{1}{S_{n}(t,x)+R_{n}(t,x)}+1\bigr)^{2}dx\Bigr)^{\frac{1}{2}}\\ \geqslant& \mathbb{E}\int_{\mathcal{L}}\frac{\alpha(x)(I_{n}(t,x)+E_{n}(t,x))}{S_{n}(t,x)+R_{n}(t,x)+I_{n}(t,x)+E_{n}(t,x)}dx
 \end{split}
 \end{equation}
 Combining the above estimates, we can obtain
 \begin{equation}
 \begin{split}
 &\int_{4}^{t}\left|\alpha  \right|_{H} \Bigl(\mathbb{E}\int_{\mathcal{L}}\frac{(I_{n}(s,x)+E_{n}(s,x))^{2}}{(1+I_{n}(s,x)+E_{n}(s,x))^{2}}dx\Bigr)^{\frac{1}{2}} \left ( 2\sqrt{e^{-s}k_{0}} +2\tilde{M}^{\frac{1}{2}}+3\right )ds\\\geqslant& \int_{\mathcal{L}}ln\frac{I_{0}(x)+E_{0}(x)}{1+I_{0}(x)+E_{0}(x)}dx+\hat{R}t-8\left|\alpha  \right|_{H} ,\notag
 \end{split}
 \end{equation}
 letting $ n \rightarrow \infty$, then 
 \begin{equation}
 \begin{split}
 &\int_{4}^{t}\left|\alpha  \right|_{H} \Bigl(\mathbb{E}\int_{\mathcal{L}}\frac{(I(s,x)+E(s,x))^{2}}{(1+I(s,x)+E(s,x))^{2}}dx\Bigr)^{\frac{1}{2}} \left ( 2\sqrt{e^{-s}k_{0}} +2\tilde{M}^{\frac{1}{2}}+3\right )ds\\\geqslant& \int_{\mathcal{L}}ln\frac{I_{0}(x)+E_{0}(x)}{1+I_{0}(x)+E_{0}(x)}dx+\hat{R}t-8\left|\alpha  \right|_{H} ,
 \end{split}
 \end{equation}
 equivalently, 
 \begin{equation}
 \begin{split}
 &\underset{t\rightarrow \infty }{\rm liminf}\frac{1}{t}\int_{0}^{t}\left (\mathbb{E}\int_{\mathcal{L}} ((I(s,x)+E(s,x))^{2}\wedge 1)dx \right )^{\frac{1}{2}}ds \\ \geqslant &
 \underset{t\rightarrow \infty }{\rm liminf}\frac{1}{t}\int_{0}^{t}\left (\mathbb{E}\int_{\mathcal{L}} \frac{(I_{n}(s,x)+E_{n}(s,x))^{2}}{(1+I_{n}(s,x)+E_{n}(s,x))^{2}}dx \right )^{\frac{1}{2}}ds \\ \geqslant & \frac{\hat{R}}{\left|\alpha  \right|_{H}(2\tilde{M}^{\frac{1}{2}}+3)} >0. \notag
 \end{split}
 \end{equation}
 Therefore, we complete the proof of Theorem 3.2. $  \hfill\square $
 
 \begin{theorem}
 	If 
 	\begin{equation}
 	(\mu_{3}+\gamma-\alpha) _{*}=\displaystyle \inf_{ x\in \mathcal{\bar{O}}}(\mu_{3}(x)+\gamma(x)-\alpha(x))>0, \quad (\mu_{2}) _{*}=\displaystyle \inf_{ x\in \mathcal{\bar{O}}}\mu_{2}(x)>0, \notag
 	\end{equation}
 	and $ 0 \leqslant S_{0}, E_{0}, I_{0}, R_{0} \in C(\mathcal{\bar{L}};\mathbb{R}) $,
 	then the \textbf{infected category} will be \textbf{extinct} with exponential rate.
 \end{theorem}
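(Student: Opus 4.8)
The plan is to reduce the statement to an elementary linear differential inequality for the total spatial mass $\mathbb{E}\int_{\mathcal{L}}(E+I)\,dx$, obtained by integrating the $E$- and $I$-equations over $\mathcal{L}$, and then to apply Gronwall's inequality; the hypothesis $(\mu_3+\gamma-\alpha)_*>0$ is exactly what makes the resulting coefficient strictly negative.

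First I would pass to the truncated strong solutions $(S_n,E_n,I_n,R_n)$ of (\ref{s}), as in Step~1 of the proof of Theorem~3.2, since these are genuine $D(A)$-valued solutions to which Itô's formula and integration by parts apply without technical difficulty, and they converge to the mild solution in $\mathbb{E}|\cdot|_H^2$ for each fixed $t$. Applying Itô's formula to the linear functionals $\int_{\mathcal{L}}E_n(t,x)\,dx$ and $\int_{\mathcal{L}}I_n(t,x)\,dx$, the diffusion contributions $\int_{\mathcal{L}}A_2E_n\,dx$ and $\int_{\mathcal{L}}A_3I_n\,dx$ vanish by the Neumann boundary condition (divergence theorem), while the stochastic parts $\sum_k\sqrt{a_{k,i}}\bigl(\int_{\mathcal{L}}e_k u_n\,dx\bigr)dB_{k,i}$ are mean-zero martingales. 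Taking expectations and adding the two identities, the coupling terms $\pm\sigma E_n$ cancel and I arrive at
\begin{equation}
\frac{d}{dt}\mathbb{E}\int_{\mathcal{L}}\bigl(E_n(t,x)+I_n(t,x)\bigr)dx=\mathbb{E}\int_{\mathcal{L}}\Bigl[-\mu_2(x)E_n+\alpha(x)\frac{S_nI_n}{S_n+I_n+E_n+R_n}-\bigl(\mu_3(x)+\gamma(x)\bigr)I_n\Bigr]dx. \notag
\end{equation}

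The key elementary observation is that, by the positivity established in Theorem~3.1 and the stated convention, $0\le \frac{S_nI_n}{S_n+I_n+E_n+R_n}\le I_n$ pointwise, so the right-hand side is at most $\mathbb{E}\int_{\mathcal{L}}\bigl[-\mu_2(x)E_n-(\mu_3(x)+\gamma(x)-\alpha(x))I_n\bigr]dx\le -c\,\mathbb{E}\int_{\mathcal{L}}(E_n+I_n)\,dx$ with $c:=\min\{(\mu_2)_*,(\mu_3+\gamma-\alpha)_*\}>0$. Gronwall then gives $\mathbb{E}\int_{\mathcal{L}}(E_n(t,x)+I_n(t,x))\,dx\le e^{-ct}\int_{\mathcal{L}}(E_0(x)+I_0(x))\,dx$ uniformly in $n$; letting $n\to\infty$ (using $\mathbb{E}\int_{\mathcal{L}}|E(t)-E_n(t)|\,dx\le(\mathbb{E}|E(t)-E_n(t)|_H^2)^{1/2}\to0$, and likewise for $I$, since $|\mathcal{L}|=1$) yields $\mathbb{E}\int_{\mathcal{L}}(I(t,x)+E(t,x))\,dx\le e^{-ct}\int_{\mathcal{L}}(I_0(x)+E_0(x))\,dx$, i.e. exponential decay of the infected mass. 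Hence $\frac1t\int_0^t\mathbb{E}\int_{\mathcal{L}}(I(s,x)+E(s,x))\,dx\,ds\le\frac1{ct}\int_{\mathcal{L}}(I_0+E_0)\,dx\to0$, which is precisely extinction, with rate $c$. I expect the only real work to be the bookkeeping behind the Itô computation at the level of (\ref{s}) — the vanishing of the Laplacian term, the integrability needed to discard the martingale part after taking expectations, and the interchange of $\int_{\mathcal{L}}$, $\int_0^t$ and $\mathbb{E}$ — together with the limit passage $n\to\infty$; alternatively one can bypass Itô by integrating the mild formulation (\ref{m}) directly over $\mathcal{L}$, using mass conservation $\int_{\mathcal{L}}e^{tA_i}f\,dx=\int_{\mathcal{L}}f\,dx$ for the Neumann heat semigroup and the vanishing mean of the stochastic convolutions, which leads to the same inequality. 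The estimate itself is entirely elementary once the sign condition $\mu_3+\gamma-\alpha>0$ is in force.
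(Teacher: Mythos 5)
Your proposal is correct and follows essentially the same route as the paper: integrate the $E$- and $I$-components over $\mathcal{L}$, take expectations so the noise terms vanish, bound $\frac{SI}{S+I+E+R}\leqslant I$ by positivity, and deduce the linear differential inequality with coefficient $-\min\{(\mu_2)_*,(\mu_3+\gamma-\alpha)_*\}$ giving exponential decay. The paper works directly with the mild solution via the functional $Ju=\int_{\mathcal{L}}u\,dx$ rather than through the truncated approximations $(S_n,E_n,I_n,R_n)$, but this is exactly the alternative you mention at the end, and the key estimate is identical.
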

 \begin{proof}
 	\rm Define the following operator $J: L^{2}(\mathcal{L};\mathbb{R}) \rightarrow \mathbb{R}$,
 	\begin{equation}
 	Ju:= \int_{\mathcal{L}}u(x)dx, \quad \forall u \in  L^{2}(\mathcal{L};\mathbb{R}). \notag
 	\end{equation}
 	Thus, we can obtain
 	\begin{equation}
 	\begin{split}
 	&\int_{\mathcal{L}}(I(t,x)+E(t,x))dx=\int_{\mathcal{L}}(I_{0}(x)+E_{0}(x))dx\\+&\int_{0}^{t}\int_{\mathcal{L}}\Bigl(-\mu _{2}(x)E(s,x)+ \alpha(x) \frac{S(s,x)I(s,x)}{S(s,x)+I(s,x)+E(s,x)+R(s,x)} -\mu _{3}(x)I(s,x)-\gamma(x)I(s,x)\Bigr)dxds \\+&\int_{0}^{t}J(e^{(t-s)A_{2}}E(s))dW_{2}(s) +\int_{0}^{t}J(e^{(t-s)A_{3}}I(s))dW_{3}(s) .\label{i1}
 	\end{split}
 	\end{equation}
 	(\ref{i1}) implies 
 	\begin{equation}
 	\begin{split}
 	&\mathbb{E}\int_{\mathcal{L}}(I(t,x)+E(t,x))dx=\int_{\mathcal{L}}(I_{0}(x)+E_{0}(x))dx\\+&\int_{0}^{t}\mathbb{E}\int_{\mathcal{L}}\Bigl(-\mu _{2}(x)E(s,x)+ \alpha(x) \frac{S(s,x)I(s,x)}{S(s,x)+I(s,x)+E(s,x)+R(s,x)} -\mu _{3}(x)I(s,x)-\gamma(x)I(s,x)\Bigr)dxds .
 	\end{split}
 	\end{equation}
 	Let $ m= \rm min\left\{(\mu _{3}+\gamma -\alpha )_{*}, (\mu _{2})_{*} \right\}$, then we have
 	\begin{equation}
 	\begin{split}
 	&\mathbb{E}\int_{\mathcal{L}}(I(t,x)+E(t,x))dx-\mathbb{E}\int_{\mathcal{L}}(I(s,x)+E(s,x))dx\\=&
 	\int_{s}^{t}\mathbb{E}\int_{\mathcal{L}}\Bigl(-\mu _{2}(x)E(r,x)+ \alpha(x) \frac{S(r,x)I(s,x)}{S(r,x)+I(r,x)+E(r,x)+R(r,x)} -\mu _{3}(x)I(r,x)-\gamma(x)I(r,x)\Bigr)dxdr \\ \leqslant&
 	\int_{s}^{t}\mathbb{E}\int_{\mathcal{L}}\Bigl(-\mu _{2}(x)E(r,x) -\bigl(\mu _{3}(x)+\gamma(x)- \alpha(x)\bigr)I(r,x)\Bigr)dxdr \\ \leqslant &
 	-m \int_{s}^{t}\mathbb{E}\int_{\mathcal{L}}(I(r,x)+E(r,x))dxdr. \label{i2}
 	\end{split}
 	\end{equation}
 	(\ref{i2}) implies
 	\begin{equation}
 	\frac{d}{dt^{+}}\mathbb{E}\int_{\mathcal{L}}(I(t,x)+E(t,x))dx \leqslant -m\mathbb{E}\int_{\mathcal{L}}(I(t,x)+E(t,x))dx, \notag 
 	\end{equation}
 	that means for some $ C>0$, 
 	\begin{equation}
 	\mathbb{E}\int_{\mathcal{L}}(I(t,x)+E(t,x))dx \leqslant Ce^{-mt} , \notag
 	\end{equation}
 	then the infected category will be extinct with exponential rate.
 	$  \hfill\square $
 \end{proof}
 
 \section{Summary}
 \indent In this paper, we considered the effects of spatial variables and space-time white noise on the SEIRS epidemic model. In the means of mild solution, we proved the well-posedness for the stochastic partial differential equation. Furthermore, we found sufficient conditions for permanence and extinction to determine the longtime behavior of the solution.\\
 \indent In our future work, we would like to consider the stochastic regularity of the solution in this SEIRS epidemic model, and the more optimal conditions of longtime behavior are worthy to consider. It is also possible to give the numerical analysis about the epidemic models which contain the spatial variables.
 
\bibliography{references}
\end{document}